\newcommand{\T}{\top}
\DeclareMathOperator*{\esssup}{ess\, sup}
\def\del{\partial}
\newcommand{\RR}{\mathbb{R}}
\newcommand{\eps}{\varepsilon}
\newcommand{\D}{\mathrm{D}}
\newcommand{\PP}{\mathbb{P}}
\newcommand{\rank}{\mbox{rank}}
\def\clap#1{\hbox to 0pt{\hss#1\hss}}
\newcommand{\SP}{\hspace{1pt}}
\newcommand{\DSP}{\hspace{2pt}}
\newtheorem{theorem}{Theorem}
\newtheorem*{corollary*}{Corollary}
\newtheorem*{definition*}{Definition}
\newtheorem{definition}{Definition}
\newtheorem{lemma}{Lemma}
\newtheorem{remark}{Remark}
\newtheorem*{remark*}{Remark}
\newtheorem*{mtheorem*}{Main Theorem}
\definecolor{cstblue}{rgb}{0,0,0.85}
\definecolor{cstred}{rgb}{1,0,0}
\title{
{\large\bf RELATIVE ENTROPY IN HYPERBOLIC  RELAXATION FOR BALANCE LAWS}\\ {\footnotesize
(In: {\it Communications in Mathematical Sciences}, 12-6, pp. 1017-1043 (2014))}
}
\numberwithin{lemma}{section}
\numberwithin{theorem}{section}
\numberwithin{remark}{section}
\numberwithin{equation}{section}
\numberwithin{definition}{section}
\author{Alexey Miroshnikov\thanks {Department of Mathematics and Statistics, University of Massachusetts
Amherst, MA 01003, USA, amiroshn@math.umass.edu.} \;   and \, Konstantina Trivisa
\thanks{Department of Mathematics,
University of Maryland, College Park, MD 20742, USA, trivisa@math.umd.edu. }}
\date{}
\begin{document}

\numberwithin{lemma}{section}
\numberwithin{theorem}{section}
\numberwithin{remark}{section}
\numberwithin{equation}{section}
\numberwithin{definition}{section}

\maketitle

\begin{center}
\textbf{Abstract}
\end{center}
We present a  general framework for the approximation of systems of hyperbolic balance
laws. The novelty of the analysis lies in the construction of suitable relaxation systems
and the derivation of a delicate estimate on the relative entropy. We provide  a direct proof of
convergence  in the smooth regime for a wide class of physical systems. We present
results for systems arising in materials science, where the presence of source terms
presents a number of additional  challenges and  requires delicate treatment. Our
analysis is in the spirit of the framework introduced by Tzavaras  \cite{Tz05} for systems of
hyperbolic conservation laws. \setcounter{tocdepth}{1}

\section{Introduction}
We present a general framework for the approximation of systems of hyperbolic balance
laws,
\begin{equation}\label{BLAWINTRO}
\del_t u+\del_\alpha f_\alpha( u )=g(u) \quad u(x,t)\in\RR^n,\;x\in\RR^d,
\end{equation}
by relaxation systems presented in the form of the extended system
\begin{equation}\label{RLXBLAWSYS}
\del_tU+\del_\alpha F_\alpha(U)=\frac{1}{\eps}R(U)+G(U), \quad U(x,t)\in\RR^N,\;x\in\RR^d
\end{equation}
in the regime where the solution of the limiting  system (as $\eps \to 0$) is smooth.
Motivated by the structure of physical models and the analysis in \cite{CHENLEVLIU}, we
deal with relaxation systems of type \eqref{RLXBLAWSYS} which are equipped with a
globally defined, convex entropy $H(U)$ satisfying
\begin{equation}\label{RXENTIDINTRO}
\del_tH(U)+\del_\alpha Q_\alpha(U)= \frac{1}{\eps} \D H(U) R(U) + \D H(U) G(U).
\end{equation}

\par\smallskip

The problem of numerical approximation of nonlinear hyperbolic balance laws is extremely
challenging. In the present article  we identify a class of relaxation schemes suitable
for the approximation of solutions to certain systems of hyperbolic balance laws arising
in continuum physics. The relaxation schemes proposed in our work provide a very
effective mechanism for the approximation of the solutions of these systems with a very
high degree of accuracy.

\par\smallskip

The main contribution of the present article to the existing theory can be characterized
as follows:
\begin{itemize}
\item This work provides a general framework describing how, given a physical system
governed by a hyperbolic balance law, one can construct an extended system endowed with a
globally defined, convex entropy $H(U)$ and the resulting relaxation system for its
approximation. It has the potential of being of use for the construction of suitable
approximating schemes for a variety of hyperbolic balance laws. Our analysis treats a
large class of physical systems such as the system of elasticity \eqref{ELASTSYST} (c.f.
Section \ref{S6}), two phase flow models \eqref{ISCOMB} (c.f. Section \ref{S7}), and
general symmetric hyperbolic systems \eqref{SYMSYST} (c.f. Section \ref{S8}). In the
latter application the relaxation of the hyperbolic system is obtained by relaxing
$n$-vector flux components.

\item Our framework is applicable in the multidimensional setting and provides a rigorous
proof of the relaxation  limit and a rate of convergence for a large class of physically relevant
hyperbolic balance laws. As it is well known, results for multidimensional systems of
hyperbolic balance laws are limited in the literature. In addition, our analysis treats a
large class of source terms: those satisfying a special mechanism that induces
dissipation as well as more general source term.

\end{itemize}

\par\smallskip

We establish convergence of  weak solutions of \eqref{RLXBLAWSYS} to solutions of the
equilibrium system \eqref{BLAWINTRO} via a relative entropy argument which relies on
\eqref{RXENTIDINTRO}. The proof provides a rate of convergence. The relative entropy
method relies on the ``weak-strong" uniqueness principle established by Dafermos for
systems of conservation laws admitting convex entropy functional \cite{Dafermos79}, see
also DiPerna \cite{DiPerna79}. In addition to the pioneer work of Dafermos and DiPerna,
the relative entropy method has been successfully used to study hydrodynamic limits of
particle systems \cite{BV2009, GLT-2009, MV2008, Mourragui-1996,Yau-1991}, hydrodynamic limits
from kinetic equations to multidimensional macroscopic models \cite{Brenier2000, BV2005,
KMT-2012}, as well as the convergence of numerical schemes in the context of
three-dimensional polyconvex elasticity \cite{LT, MT11}.

\par\smallskip

The main ingredients of our approach can be formulated as follows:
\begin{itemize}
\item  A relative entropy inequality  which provides a simple and direct convergence
framework  before formation of shocks. The reader may contrast the present framework  to the
classic convergence framework for relaxation limits, which proceeds through analysis of
the linearized (collision or relaxation) operator \cite{Yong-2004}.

\item Physically grounded structural hypotheses imposed on the relaxation system. These
structural hypotheses  will be of use   for the  derivation of  the relative entropy
inequality and for the proof of the desired convergence. The relative entropy computation
hinges on {\it entropy consistency} \cite{Tz05}, that is, the restriction of the entropy
pair $H-Q_{\alpha}$ on the manifold of Maxwellians $${\mathcal M}:=\{U\in\RR^N: R(U)=0
\}=\{U\in\RR^N: U=M(u), \,\, u\in\RR^n \}$$ induces an entropy pair for the equilibrium
system \eqref{BLAWINTRO} in the form
\begin{equation*}
    \eta(u)=H(M(u)), \quad q_{\alpha}=Q_{\alpha}(M(u)).
\end{equation*}

\item A physically motivated dissipation mechanism (in the sense of \eqref{SRCDSP})
associated with the source term in \eqref{RLXBLAWSYS} with respect to the manifold of
Maxwellians on which relaxation takes place. The dissipation mechanism on
\eqref{RLXBLAWSYS} induces weak dissipation on the equilibrium balance law
\eqref{BLAWINTRO} due to {\em source consistency} requirement (c.f. Section
\ref{SRCCONST}). The concept of {\em weak dissipation} for hyperbolic balance laws was
introduced by Dafermos in \cite{Dafermos06}. To realize the role of dissipation in the
present context, the reader may contrast the result of Theorem \ref{WKSRCTHM}  for weakly
dissipative source terms with Theorem \ref{SRCLIPTHM} which  corresponds to the case of a
general source.
\end{itemize}

\par\smallskip

The paper is organized as follows: In Section \ref{S2} we present the structural
hypotheses on \eqref{RLXBLAWSYS} which are of use in the derivation of the relative
entropy inequality  and the proof of the desired convergence. In Section \ref{S3} we present
the main theorems of this article for two different classes of source terms. In Section
\ref{S4} we define the concept of {\em relative entropy} $H^r(U^{\eps}, M(\bar{u}))$ and
entropy fluxes $Q^r_{\alpha}(U^{\eps}, M(\bar{u}))$. Section \ref{S5} contains  the proof
of the main result, which is based on error estimates for the approximation of the
conserved quantities by the solution of the relaxation system. Applications to nonlinear
elasticity and two phase flow models (combustion) are presented in Section \ref{S6} and
Section \ref{S7}, respectively. Finally, Section \ref{S8} provides a general framework
describing how,  given a physical system governed by a symmetric hyperbolic balance law,
one can construct an extended system and the resulting relaxation system for its
approximation.

\section{Notation and Hypotheses}\label{S2}
For the convenience of the reader we collect in this section all the relevant notation and hypotheses.
Here and in what follows:
\begin{enumerate}
\item[] {\bf 1.} $G, R, F_{\alpha}, \alpha =1, \dots, d$   denote the mappings
$G, R,  F_{\alpha}: \RR^N \to \RR^N,$
 whereas $g, f$ denote the maps
 $g, f: \RR^n\to \RR^n.$
In our presentation, $G(U), R(U), F_{\alpha}(U), g(u), f(u)$ are treated as column vectors.

\item[] {\bf 2.} $\D$, $\D_u$  denote the differentials with respect to the state vectors $U\in \RR^N$ and $u\in\RR^n$ respectively. When used in conjunction with matrix notation, $\D$ and $\D_u$ represent  a row operation:
\begin{equation*}
\D=[\del/\del U^1, \dots, \del/\del U^N ], \quad  \D_u=(\del/\del u^1, \dots, \del/\del u^n ).
\end{equation*}
\item[] {\bf 3. }
The symbol $\del_{\alpha}$  denotes the derivative with respect to $x_{\alpha}$, $\alpha=1,\dots,d$.
The summation convention over repeated indices is employed throughout the article: repeated indices are summed over the range $1,\dots,d$.
\end{enumerate}

\par\smallskip

Motivated by theoretical studies  \cite{JK-2000, LT, Tz05} as well as computations
devoted to the approximation of the hyperbolic systems of conservation laws and  kinetic
equations \cite{ Caflish79} by relaxation schemes, our analysis is based on the following
assumptions:
\begin{itemize}
\item The manifold $\mathcal{M}$ of Maxwellians (the
equilibrium solutions $U_{eq}$ to the equation $R(U)=0$) can be
parameterized by $n$ conserved quantities
\begin{equation}\tag{H1}\label{MAXWPAR}
U_{eq}=M(u), \quad u\in\RR^n.
\end{equation}

\item $\nabla R(U)$ satisfies the nondegeneracy condition
\begin{equation} \tag{H2}\label{KERGRADR}
\begin{cases}
\mbox{dim} \SP \mathcal{N}(\nabla R(M(u)))&=n\\
\mbox{dim} \SP \mathcal{R}(\nabla R(M(u)))&=N-n
\end{cases}
\end{equation}

\item There exists a projection matrix
\begin{equation*}\label{PROJMTX}
\PP:\RR^N\!\rightarrow\RR^n \quad \mbox{with} \quad
{\mbox{rank}}\SP \PP = n
\end{equation*}
corresponding to Maxwellians that determines the conserved
quantity
$$u=\PP U\,\,\, \mbox{and satisfies}$$
\begin{equation}\tag{H3}\label{PROJPROP}
    \quad \PP M(u)=u \quad \mbox{and} \quad \PP R(U)=0 \quad \mbox{for all} \quad u \in \RR^n, U\in \RR^N.
\end{equation}
\end{itemize}
In this case, the corresponding system of balance laws for
conserved quantities is given by
\begin{equation}\label{BLAW1}
\del_t u+\del_\alpha\PP F_\alpha\bigl(M(u)\bigr)=\PP
G\bigl(M(u)\bigr)
\end{equation}
which can be rewritten in the form
\begin{equation}\nonumber
\del_t u+\del_\alpha f_\alpha( u )=g(u)
\end{equation}
with $f$, $g$ defined by
\begin{equation}\label{BLAWFLXSRC}
\begin{aligned}
f_\alpha(u)&:=\PP F_\alpha(M(u)),\quad g(u):=\PP
G\bigl(M(u)\bigr).
\end{aligned}
\end{equation}
The system of balance laws \eqref{BLAW1} is resulting by applying $\PP$ to
\eqref{RLXBLAWSYS}, letting $\eps\to 0$, and then using the fact that at the equilibrium
$U_{eq}=M(u)$, $u=\PP U_{eq}$.

\par\smallskip

Our analysis exploits the entropy structure of the relaxation systems under consideration. Below are stated the main structural assumptions on \eqref{RLXBLAWSYS}.


\subsection{Entropy Structure}

Some additional assumptions on the system \eqref{RLXBLAWSYS} read:
\begin{itemize}
\item The system \eqref{RLXBLAWSYS} is equipped with a globally
defined entropy $H(U)$ and corresponding fluxes $Q_{\alpha}(U)$,
$\alpha=1,\dots,d$, such that
\begin{equation}\tag{H4}\label{RXENTPROP1}
\begin{aligned}
& H:\RR^N \rightarrow \RR  \,\,\, \mbox{is convex}, \\
& {\D}H(U) \SP {\D}F_{\alpha}(U)={\D} Q_{\alpha}(U) .
\end{aligned}
\end{equation}
\item The entropy $H(U)$ is such that
\begin{equation}\tag{H5}\label{RXENTPROP2}
\begin{aligned}
D(U):= - \D H(U) R(U) & \geqslant 0, \quad U\in \RR^N.\\
\end{aligned}
\end{equation}
The entropy equation for the relaxation system \eqref{RLXBLAWSYS} in that case is given
by
\begin{equation}\label{RXENTID}
\del_tH(U)+\del_\alpha Q_\alpha(U) = - \frac{1}{\eps} \D(U) + \D H(U) G(U).
\end{equation}

\item {\it Entropy consistency.} The restriction of the entropy pair $H, Q_\alpha,$
\begin{equation}\tag{H6} \label{ENTCONSHYP}
\eta(u):=H\bigl(M(u)\bigr), \quad
q_\alpha(u):=Q_\alpha\bigl(M(u)\bigr),
\end{equation}
on the equilibrium manifold $\mathcal{M}$ is an entropy pair $\eta-q_{\alpha}$ for the
system of conserved quantities \eqref{BLAW1}, that is,
\begin{equation*}\label{BLAWENTPROP1}
{\D}_u \eta (u) {{\D}_u \SP f_{\alpha}}(u) = {\D}_u
q_{\alpha}(u),\quad u\in \RR^n.
\end{equation*}
In that case smooth solutions to \eqref{BLAW1} satisfy the additional balance law
\begin{equation}\label{BLAWENTID1}
\begin{aligned}
&\del_tH(M(u))+\del_\alpha Q_\alpha(M(u))={\D}_u \eta(u) g(u).\\
\end{aligned}
\end{equation}
\end{itemize}

In the sequel, we present some implications on the geometry of the manifold $\mathcal{M}$ obtained  as a consequence of the entropy structure of the relaxation systems. We  refer the reader to \cite{Tz05} for the details of the derivation in a relevant setting.

\subsection{Properties of $H$, $Q_{\alpha}$ on the manifold $\mathcal{M}$}

The geometric implications of the assumptions
\begin{equation*}
\begin{aligned}
&{\D}H(U)R(U) \leqslant 0, \quad R(M(u))=0\\
&\rank \SP \PP = n, \quad  \PP(M(u))=u, \,\quad \forall u\in\RR^n, \, U\in \RR^N\\
\end{aligned}
\end{equation*}
are the following \cite{Tz05}:
\begin{equation}\label{GEOMIMPL}
\begin{aligned}
\mathcal{R}({\D}R(M(u))) &= \mathcal{N}(\PP)\\
{\D}H(M(u)) {} \bigl[ {\D}R(M(u))A \bigr] &= 0, \quad \, \forall u\in\RR^n,\, A\in R^N\\
{\D}H(M(u)) {} V &= 0, \quad \, \forall V\in R^N \,\mbox{with} \, \PP V =0.\\
\end{aligned}
\end{equation}

\par\smallskip

Thus, the entropy consistency hypothesis \eqref{ENTCONSHYP} along with the property \eqref{GEOMIMPL}$_3$ imply that the
gradients of entropies $\eta$, $H$ are related by
\begin{equation}\label{ENTCONSIMPL}
{\D}_u \eta(u) \PP A \SP = \SP {\D}H(M(u)) A, \quad \forall A\in\RR^N.
\end{equation}
Then, in view of  \eqref{BLAWFLXSRC}$_2$, we have
\begin{equation*}
{\D}_u \eta(u) g(u) \SP = \SP {\D}H(M(u)) \SP G(M(u)), \quad \forall u \in \RR^n
\end{equation*}
and thus the entropy equation \eqref{BLAWENTID1} for conserved quantities may be written
as
\begin{equation}\label{BLAWENTID2}
\del_tH(M(u))+\del_\alpha Q_\alpha(M(u))={\D}H(M(u))\SP G(M(u)).
\end{equation}

\subsection{Dissipation}

Making use of the dissipation incorporated in the term $D(U)= -DH(U) R(U)$ we introduce
an additional  hypothesis, which plays the role of {\em relative dissipation}, a measure
of the distance between a relaxation state vector $U \in \RR^N$ and its ``equilibrium
version" $M(u) \in \RR^N$ with $u= \PP(U)$ on the manifold of Maxwellians $\mathcal{M}$.
More precisely,
\begin{itemize}
\item
We assume that  for
some $\nu>0$
\begin{equation}\tag{H7} \label{RLXDSP}
 -\bigl[{\D}H(U)-{\D}H(M(u))\bigr] \bigl[R(U)-R(M(u))\bigr] \SP \geqslant \SP \nu \SP |U-M(u)|^2
\end{equation}
for arbitrary $U \in\RR^N$ with $u=\PP U$.
\end{itemize}

Note that \eqref{RLXDSP} is stronger then the following assumption:
\begin{itemize}
\item For every ball $B_{r} \subset \RR^N$ there exists $\nu_r>0$ such that
\begin{equation}\tag{H7$^*$}\label{RLXDISSPHYPLOC}
-\bigl[{\D}H(U)-{\D}H(M(u))\bigr] \bigl[R(U)-R(M(u))\bigr] \geqslant
\nu_r\bigl|U-M(u)\bigr|^2
\end{equation}
for  $U,M(u)\in B_{r}$, where $u=\PP U$,
\end{itemize}
which will be of use in Theorem \ref{locbdd}.

\par\smallskip

Our analysis handles a large class of source terms. The following hypothesis will be
relevant to our subsequent discussion.
\begin{itemize}
\item  The source term $G(U)$ is {\it weakly dissipative with respect to the manifold
$\mathcal{M}$} in the sense of Definition \ref{D1.1}.
\begin{definition} \label{D1.1} \rm
We say that the source $G(U)$ is {\it weakly dissipative with respect to the manifold
$\mathcal{M}$}, if for all arbitrary $U, M(\bar{u}) \in \mathbb{R}^N$
\begin{equation}\tag{H8} \label{SRCDSP}
-\bigl[{\D}H(U)-{\D}H\bigl(M(\bar{u})\bigr)\bigr] \bigl[G(U)-G(M(\bar{u})) \bigr] \geqslant 0.
\end{equation}
\end{definition}
\end{itemize}
An alternative condition on the source $G(U)$, exploited in Theorem \ref{SRCLIPTHM},
reads:
\begin{itemize}
\item For every compact set $\mathcal{A} \subset \RR^N$ there exists $L_{\mathcal{A}}>0$
such that
\begin{equation}\tag{H9}\label{SRCLIP}
\bigl|G(U)-G(\bar{U})\bigr| \SP \leqslant \SP L_{\mathcal{A}} \SP |U-\bar{U}| \quad
\mbox{for all} \quad U\in \RR^N, \SP \bar{U}\in \mathcal{A}.
\end{equation}
\end{itemize}


\subsection{ Source consistency}\label{SRCCONST} \label{gweakdissip}\rm We first note
that the hypothesis \eqref{SRCDSP} is less restrictive than the requirement for $G$ to be
weakly dissipative, hence a special name for it: $\mathcal{M}$-weakly dissipativity.

\par\smallskip

We next point out that \eqref{SRCDSP} requires certain consistency between the source
terms $G(U)$ and $g(u)$ which are related by \eqref{BLAWFLXSRC}. Namely, take an
arbitrary $u\in\RR^n$ and set $U=M(u)$ in \eqref{SRCDSP}. Then, recalling
\eqref{ENTCONSIMPL} one concludes that \eqref{SRCDSP} implies that the source $g(u)$ in
the system \eqref{BLAW1} is {\it weakly dissipative}, that is
\begin{equation}\label{SRCCONSDSP}
-\Bigl({\D_u}\eta(u)-{\D_u}\eta (\bar{u}) \Bigr) \Bigl(g(u)-g(\bar{u})\Bigr) \SP
\geqslant \SP 0, \quad \, u,\bar{u}\in \RR^n.
\end{equation}
Thus, \eqref{SRCDSP} makes sense {\it only} when the source $g(u)$ in the equilibrium
system \eqref{BLAW1} is weakly dissipative in the sense of \eqref{SRCCONSDSP}.

\subsection{Weak solutions and entropy admissibility}

We introduce the notions of {\it weak solutions} and {\it entropy admissibility}
following the discussion in \cite[Sec. 4.3, 4.5]{Dafermos10}.
\begin{definition}\label{WKSOLDEF}
A locally bounded measurable function $U(x,t)$, defined on $\RR^{d} \times [0,T)$ and
taking values in an open set $\mathcal{O}\subset\RR^N$, is a weak solution to
\begin{equation}\label{CAUCHYPROB}
    \del_t U + \del_{\alpha} F_{\alpha} (U) = \frac{1}{\eps}R(U)+G(U)\SP, \quad U(x,0)  =
    U_0(x)\,,
\end{equation}
with $F,R,G: \mathcal{O} \to \RR^N$ Lipschitz, if
\begin{equation}\label{CAUCHYPROBWEAK}
\begin{aligned}
\int_{0}^T \int_{\RR^d} \,   \Big\{\del_t \widehat{\Phi} \, U  &+  \del_{\alpha}
\widehat{\Phi} \, F_{\alpha}(U) \Big\} \SP dx\SP dt + \int_{\RR^d} \widehat{\Phi}(x,0) \, U_0(x) \SP dx\\
&+ \int_0^T\int_{\RR^d} \widehat{\Phi}(x,t)
 \, \Big[ \frac{1}{\eps}R(U) +G(U) \Big] \SP dx\SP dt \, = \, 0
\end{aligned}
\end{equation}
for every Lipschitz test function $\widehat{\Phi}(x,t)$, with compact support in $\RR^d
\times [0,T)$ and values in $\mathbb{M}^{1\times N}$.
\end{definition}

\par
\begin{definition}\label{ADMDEF}
Assume that $H,Q_{\alpha}$ is an entropy-entropy flux pair of \eqref{CAUCHYPROB}. Then, a
weak solution $U(x,t)$ of \eqref{CAUCHYPROB}, in the sense of Definition \ref{WKSOLDEF},
defined in $\RR^d \times [0,T)$, is entropy admissible, relative to $H$, if
\begin{equation}\label{RLXADMWEAK}
\begin{aligned}
\int_{0}^T \int_{\RR^d} \,  \Big\{\del_t \varphi\,  & H(U)
+ \del_{\alpha} \varphi \, Q_{\alpha}(U)  \Big\} \SP dx\SP dt + \int_{\RR^d} \varphi(x,0) \, H(U_0(x)) \SP dx \\
& \quad + \int_0^T\int_{\RR^d} \varphi(x,t) \, \D H(U) \Big[\frac{1}{\eps}R(U)+G(U) \Big]
\SP dx\SP dt \, \geqslant \, 0
\end{aligned}
\end{equation}
for every nonnegative Lipschitz test function $\varphi(x,t)$, with compact support in
$\RR^d \times [0,T)$.
\end{definition}

\begin{remark}\rm
Note, a smooth solution $U^{\eps}$ of
\eqref{RLXBLAWSYS}  satisfies \eqref{RLXADMWEAK} identically as an equality and
therefore it is admissible.
 It is worth pointing out that  relaxation systems of type
\eqref{RLXBLAWSYS} are often designed to produce global smooth solutions. We refer the  reader to
\cite{HN03,Yong-2004} as well as \cite[Section 5.2]{Dafermos10} for further remarks.
A more detailed discussion about the existence of smooth solutions follows in the sequel.
\end{remark}

\section{Main Results}\label{S3}
In this section we present the main results of this article.
\subsection{$\mathcal{M}$-weakly dissipative source $G(U)$}
\begin{theorem}\label{WKSRCTHM}

Let $\SP\bar{u}(x,t)$ be a smooth solution of the equilibrium system
\eqref{BLAW1}, defined on $\RR^d \times [0,T]$, with initial data $\bar{u}_0(x)$. Let
$\{U^{\eps}(x,t)\}$ be a family of admissible weak solutions of the relaxation system
\eqref{RLXBLAWSYS} on $\RR^d \times [0,T)$, with initial data $U^{\eps}_0(x)$, and let
$u^{\eps}(x,t)=\PP U^{\eps}(x,t)$ denote the conserved quantity associated to
$U^{\eps}$.


\par\smallskip

Assume \eqref{MAXWPAR}-\eqref{SRCDSP} hold and suppose that:
\begin{itemize}

\item[(i)] $H(U)$, $F(U)$ in the relaxation system \eqref{RLXBLAWSYS} satisfy for some
$M,\SP\mu, \SP\mu' >0$
\begin{equation}\label{EXTSYSBOUNDSHYP}
\mu {\bf I} \SP \leqslant \D^2 H(U) \SP \leqslant \mu' {\bf I}, \quad |\D
F_{\alpha}(U)|<M, \:\quad
 U \in\RR^N.
\end{equation}

\item[(ii)] $\eta(u)=H(M(u))$, $f(u)=\PP F(M(u))$ satisfy for some $K>0$
\begin{equation}\label{EQSYSBOUNDSHYP}
\bigl|\D_u^3\eta(u)\bigr|\leqslant K, \: \quad  \bigl|{\D}^2_u f(u)\bigr|\leqslant K,
\:\quad u\in\RR^n.
\end{equation}
\end{itemize}
Then, for $R>0$ there exist constants $C=C(R,T,\nabla \bar{u},M,K)>0$ and $s>0$
independent of $\eps$ such that
\begin{equation}
\tag{ER} \label{ERREST} \int_{|x|<R}H^r(x,t) \SP dx \SP \leqslant \SP C
\biggl(\int_{|x|<R+st}H^r(x,0)\,dx+\eps\biggl), \quad \mbox{a.e.} \,\, t
\in [0,T).
\end{equation}
Moreover, if the initial data satisfy
\begin{equation}
\tag{CD}
\label{CONVDATAU} \int_{|x|<R+sT}H^r(x,0)\,dx\rightarrow
0\quad\text{as}\,\;\eps\downarrow 0,
\end{equation}
then
\begin{equation}
\tag{CS} \label{CONVU}
\esssup_{t\in[0,T)}\int_{|x|<R}|U^\eps-M(\bar{u})|^2(x,t)\,dx\rightarrow
0\quad\text{as}\;\,\eps\downarrow 0.
\end{equation}
\end{theorem}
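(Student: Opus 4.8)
The plan is to run a relative-entropy (weak--strong) argument in the extended variables, comparing the admissible weak solution $U^\eps$ to the Maxwellian target $\bar{U} := M(\bar{u})$ built from the smooth equilibrium solution $\bar{u}$. I would work with the relative entropy and relative flux
\begin{equation*}
H^r := H(U^\eps) - H(\bar{U}) - \D H(\bar{U})\,(U^\eps - \bar{U}), \qquad Q^r_\alpha := Q_\alpha(U^\eps) - Q_\alpha(\bar{U}) - \D H(\bar{U})\,(F_\alpha(U^\eps) - F_\alpha(\bar{U})).
\end{equation*}
The two-sided Hessian bound \eqref{EXTSYSBOUNDSHYP} gives $\tfrac{\mu}{2}|U^\eps - \bar{U}|^2 \le H^r \le \tfrac{\mu'}{2}|U^\eps - \bar{U}|^2$, so the quantitative estimate \eqref{ERREST} is the real content: once \eqref{ERREST} is established, \eqref{CONVU} follows immediately from \eqref{CONVDATAU} and the lower bound (enlarging the cone radius to $R+sT$ makes the bound uniform in $t$). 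Since $R(\bar{U})=R(M(\bar{u}))=0$, the Maxwellian carries no $1/\eps$ term; instead it produces the smooth, $\eps$-independent residual
\begin{equation*}
\mathcal{E} := \del_t \bar{U} + \del_\alpha F_\alpha(\bar{U}) - G(\bar{U}),
\end{equation*}
which satisfies $\PP \mathcal{E} = \del_t\bar{u} + \del_\alpha f_\alpha(\bar{u}) - g(\bar{u}) = 0$ because $\bar{u}$ solves \eqref{BLAW1} and $\PP M = \mathrm{id}$.

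Next I would derive the relative-entropy inequality by combining the admissibility inequality \eqref{RLXADMWEAK} for $U^\eps$ with the entropy identity \eqref{BLAWENTID2} for $\bar{U}$, together with the weak formulation \eqref{CAUCHYPROBWEAK} tested against $\D H(\bar{U})$ and its derivatives --- all carried out with Lipschitz, compactly supported test functions so as to stay within the weak-solution framework of Definitions \ref{WKSOLDEF}--\ref{ADMDEF}. The outcome is, in the sense of distributions,
\begin{equation*}
\del_t H^r + \del_\alpha Q^r_\alpha \le \tfrac{1}{\eps}\,[\D H(U^\eps) - \D H(\bar{U})]\,R(U^\eps) + [\D H(U^\eps) - \D H(\bar{U})]\,[G(U^\eps) - G(\bar{U})] - [\D H(U^\eps) - \D H(\bar{U})]\,\mathcal{E} + \mathcal{F},
\end{equation*}
where $\mathcal{F}$ collects the flux error terms, quadratic in $U^\eps - \bar{U}$ and linear in $\nabla\bar{U}$. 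Here I use \eqref{ENTCONSIMPL} together with $\PP R(U^\eps)=0$ (from \eqref{PROJPROP}) and $\PP\mathcal{E}=0$ to check that every pairing of $\D H(\bar{U})$ with $R(U^\eps)$ or with $\mathcal{E}$ collapses to $\D_u\eta(\bar{u})\,\PP(\cdot)=0$; this is what keeps the dangerous $1/\eps$ contribution reduced to the single sign-definite term above.

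The three dissipation/error groups are then disposed of as follows. The relaxation term equals $\tfrac{1}{\eps}[\D H(U^\eps)-\D H(\bar{U})][R(U^\eps)-R(\bar{U})] \le -\tfrac{\nu}{\eps}|U^\eps-\bar{U}|^2$ by \eqref{RLXDSP}; the source term is $\le 0$ by the $\mathcal{M}$-weak dissipativity \eqref{SRCDSP}; and both are discarded. The residual term is split with Young's inequality against the relaxation dissipation, $|[\D H(U^\eps)-\D H(\bar{U})]\,\mathcal{E}| \le \tfrac{\nu}{2\eps}|U^\eps-\bar{U}|^2 + \tfrac{(\mu')^2\eps}{2\nu}|\mathcal{E}|^2$, so half of the dissipation absorbs it while the remainder contributes $\tfrac{(\mu')^2\eps}{2\nu}\int|\mathcal{E}|^2 \le C(\nabla\bar{u})\,\eps$ over the bounded cone --- precisely the origin of the $\eps$ in \eqref{ERREST}. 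Finally the flux error $\mathcal{F}$ is estimated by $C(M,K,\nabla\bar{u})\,|U^\eps-\bar{U}|^2 \le C\,H^r$ using the bounds \eqref{EXTSYSBOUNDSHYP}--\eqref{EQSYSBOUNDSHYP} on $\D^2 H$, $\D F_\alpha$, $\D^3_u\eta$, $\D^2_u f$; the equilibrium derivative bounds enter through the Taylor remainders of the relative flux coefficient restricted to the manifold $\mathcal{M}$.

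To localize and close the argument, I would integrate the relative-entropy inequality against a truncated-cone cutoff, i.e.\ a Lipschitz test function supported in $\{|x| < R + s(t-\tau)\}$, choosing the speed $s$ from $M,\mu,\mu'$ large enough that $|Q^r_\alpha n_\alpha| \le s\,H^r$ so the lateral flux has a favorable sign and is dropped (finite speed of propagation). This yields $\tfrac{d}{dt}\int_{|x|<R+st}H^r\,dx \le C\int_{|x|<R+st}H^r\,dx + C\eps$ for a.e.\ $t$, and Gr\"onwall's inequality gives \eqref{ERREST}, whence \eqref{CONVU} as noted. I expect the main obstacle to be the careful bookkeeping of the $1/\eps$ contributions: one must verify, via entropy consistency \eqref{ENTCONSIMPL} and the projection identities \eqref{PROJPROP}, that all $O(1/\eps)$ terms except the manifestly dissipative one cancel \emph{exactly}, and then balance the single surviving residual pairing against the dissipation so that the relaxation rate is extracted as $O(\eps)$ rather than destroying the estimate; the remaining difficulty is the standard but delicate passage from the admissibility inequality for a mere weak solution to the localized relative-entropy inequality.
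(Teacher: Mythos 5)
Your overall architecture (relative entropy against $\bar U=M(\bar u)$, cone localization with speed $s$ chosen from the convexity and flux bounds, Gr\"onwall) matches the paper's, but there is a genuine gap at the crux of the $1/\eps$ bookkeeping: you invoke \eqref{RLXDSP} to claim
$\tfrac{1}{\eps}\bigl[\D H(U^\eps)-\D H(M(\bar u))\bigr]\bigl[R(U^\eps)-R(M(\bar u))\bigr]\leqslant-\tfrac{\nu}{\eps}\bigl|U^\eps-M(\bar u)\bigr|^2$,
but (H7) is stated only for the pair $U$, $M(u)$ with $u=\PP U$. It quantifies dissipation \emph{relative to the manifold}, i.e. it controls $|U^\eps-M(u^\eps)|^2$ with $u^\eps=\PP U^\eps$, and nothing more. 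The strengthened version you use is false in general: it would force $U^\eps\to M(\bar u)$ in $L^2$ of the cone at rate $\sqrt{\eps}$ irrespective of the initial data. The correct statement (the paper's \eqref{DSPTERMALT}) is that $\D H(M(\bar u))R(U^\eps)=\D H(M(u^\eps))R(U^\eps)=0$ by \eqref{GEOMIMPL}$_3$, so the relaxation term can be rewritten with $M(u^\eps)$ in place of $M(\bar u)$ and only then does (H7) apply. Consequently your Young absorption of the residual $[\D H(U^\eps)-\D H(\bar U)]\mathcal{E}$ must first be reduced, via $\PP\mathcal{E}=0$ and \eqref{ENTCONSIMPL}, to $[\D H(U^\eps)-\D H(M(u^\eps))]\mathcal{E}$ before it can be paid for by the dissipation; as written you are absorbing a quantity the dissipation does not control.

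The same missing decomposition $U^\eps-M(\bar u)=\bigl(U^\eps-M(u^\eps)\bigr)+\bigl(M(u^\eps)-M(\bar u)\bigr)$ invalidates your treatment of the flux errors. You assert $\mathcal{F}$ is quadratic in $U^\eps-\bar U$ and hence $\leqslant C\,H^r$, but the hypotheses give only $|\D F_\alpha|<M$ (no global bound on $\D^2F_\alpha$), so the Taylor remainder of $F_\alpha$ at $\bar U$ is merely \emph{linear} in $|U^\eps-\bar U|$ and cannot be fed to Gr\"onwall. This is precisely why the paper splits each error into an on-manifold piece --- the Taylor remainder of the reduced flux $f_\alpha(u^\eps)-f_\alpha(\bar u)-\D_uf_\alpha(\bar u)(u^\eps-\bar u)$, genuinely quadratic thanks to $|\D_u^2f|\leqslant K$, and likewise for $\D_u\eta$ using $|\D_u^3\eta|\leqslant K$ (the terms $J_1,J_3$ of \eqref{JDEFS}, handled by Gr\"onwall) --- and an off-manifold piece, $\PP[F_\alpha(U^\eps)-F_\alpha(M(u^\eps))]$ and $[\D H(U^\eps)-\D H(M(u^\eps))]G(M(\bar u))$ (the terms $J_2,J_4$), which are only linear in $|U^\eps-M(u^\eps)|$ and are absorbed into the relaxation dissipation by Young's inequality with weight $\eps$, producing the $O(\eps)$ in \eqref{ERREST}. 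Without this splitting your estimate either loses the $\eps$-rate or relies on the unjustified strengthening of (H7); with it, your argument becomes the paper's proof.
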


\subsection{General source $G(U)$}

We now drop the assumption \eqref{SRCDSP} which leads to the following theorem.

\begin{theorem}\label{SRCLIPTHM}

Let $\bar{u}$ be a smooth solution of \eqref{BLAW1}, defined on $\RR^d
\times [0,T]$, with initial data $\bar{u}_0$, and $\{U^{\eps}\}$ a family of admissible
weak solutions of \eqref{RLXBLAWSYS} on $\RR^d \times [0,T)$, with initial data
$U^{\eps}_0$.


\par\smallskip

Assume \eqref{MAXWPAR}-\eqref{RLXDSP}, \eqref{SRCLIP} hold. \SP Suppose that $H(U)$,
$F(U)$, $\eta(u)$, and $f(u)$ satisfy $(i)$-$(ii)$ of Theorem \ref{WKSRCTHM}. Then, for
$R>0$ there holds the estimate \eqref{ERREST} for some constants $C=C(R,T,\nabla
\bar{u},M,K,L)>0$ and $s=s(M,\mu')>0$
independent of $\eps$. Moreover, if the initial data satisfy $\eqref{CONVDATAU}$, then
\eqref{CONVU} holds.
\end{theorem}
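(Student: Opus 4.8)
The plan is to run the relative entropy scheme exactly as for Theorem \ref{WKSRCTHM}, modifying only the single step in which the sign condition \eqref{SRCDSP} is exploited and replacing it by an estimate based on the Lipschitz bound \eqref{SRCLIP}. Throughout I work with the relative entropy
\[
H^r\bigl(U^\eps,M(\bar u)\bigr)=H(U^\eps)-H\bigl(M(\bar u)\bigr)-\D H\bigl(M(\bar u)\bigr)\bigl(U^\eps-M(\bar u)\bigr)
\]
and the associated relative flux $Q^r_\alpha$. The convexity bound \eqref{EXTSYSBOUNDSHYP} furnishes the two-sided comparison $\tfrac{\mu}{2}\,|U^\eps-M(\bar u)|^2\le H^r\le \tfrac{\mu'}{2}\,|U^\eps-M(\bar u)|^2$, whose right member is what eventually turns the entropy estimate \eqref{ERREST} into the $L^2$ statement \eqref{CONVU}.

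First I would establish the relative entropy inequality. Combining the admissibility inequality \eqref{RLXADMWEAK} for $U^\eps$ with the pointwise identities satisfied by the smooth field $M(\bar u)$ gives, in the weak sense, a bound for $\partial_t H^r+\partial_\alpha Q^r_\alpha$. Because $\bar u$ solves \eqref{BLAW1} and $R(M(\bar u))=0$, the lift $M(\bar u)$ is only an approximate solution of the relaxation system, with discrepancy $\mathcal E$; the consistency of the definitions \eqref{BLAWFLXSRC} together with $\PP M(\bar u)=\bar u$ and $\PP\,\D M(\bar u)=\mathbf{I}$ force $\PP\mathcal E=0$, so that $\mathcal E\in\mathcal N(\PP)$ and, by \eqref{GEOMIMPL}$_3$, $\D H(M(\cdot))\,\mathcal E=0$. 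The relaxation term is treated the same way: since $\PP R(U^\eps)=0$ by \eqref{PROJPROP}, \eqref{GEOMIMPL}$_3$ gives $\D H(M(u^\eps))R(U^\eps)=0$, whence
\[
\tfrac1\eps\,\D H(U^\eps)R(U^\eps)=\tfrac1\eps\bigl[\D H(U^\eps)-\D H(M(u^\eps))\bigr]\bigl[R(U^\eps)-R(M(u^\eps))\bigr]\le -\tfrac{\nu}{\eps}\,|U^\eps-M(u^\eps)|^2
\]
by \eqref{RLXDSP}, with $u^\eps=\PP U^\eps$. The flux and Hessian error terms are genuinely quadratic in $U^\eps-M(\bar u)$ and bounded by $C(M,K,\nabla\bar u)\,H^r$ using \eqref{EXTSYSBOUNDSHYP}--\eqref{EQSYSBOUNDSHYP}, while the lifting error, reduced via \eqref{GEOMIMPL}$_3$ to the linear-in-transverse-distance quantity $\bigl[\D H(U^\eps)-\D H(M(u^\eps))\bigr]\mathcal E$, is split by Young's inequality: its transverse part is absorbed into the $\tfrac{\nu}{\eps}$-dissipation and the remainder is an $O(\eps)$ term. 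This is precisely the origin of the $+\eps$ in \eqref{ERREST}, and this entire step is identical to Theorem \ref{WKSRCTHM}.

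The only new ingredient is the source. Having dropped \eqref{SRCDSP}, the source enters the relative entropy inequality through the combination $\bigl[\D H(U^\eps)-\D H(M(\bar u))\bigr]\bigl[G(U^\eps)-G(M(\bar u))\bigr]$---the very combination appearing in \eqref{SRCDSP}---modulo terms already present and controlled in the proof of Theorem \ref{WKSRCTHM}. Since the reference solution $\bar u$ is smooth with values in a compact set, $M(\bar u)$ ranges in a fixed compact set $\mathcal A$; applying \eqref{SRCLIP} with this $\mathcal A$ gives $|G(U^\eps)-G(M(\bar u))|\le L\,|U^\eps-M(\bar u)|$, and combining with $|\D H(U^\eps)-\D H(M(\bar u))|\le \mu'\,|U^\eps-M(\bar u)|$ from \eqref{EXTSYSBOUNDSHYP} bounds this contribution by $\mu' L\,|U^\eps-M(\bar u)|^2\le \tfrac{2\mu' L}{\mu}\,H^r$. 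Where Theorem \ref{WKSRCTHM} discards this term by \eqref{SRCDSP}, here it carries no sign and is simply folded into the Gronwall constant, which is precisely why $C$ now acquires a dependence on $L$.

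Finally I would localize and close. Choosing the test function to approximate the indicator of the backward cone $\{(y,\tau):|y|<R+s(t-\tau)\}$, with $s=s(M,\mu')$ fixed above the maximal characteristic speed of the relaxation system---controlled by $|\D F_\alpha|<M$ measured in the metric $\D^2 H$, $\mu\mathbf{I}\le\D^2 H\le\mu'\mathbf{I}$---renders the lateral boundary flux nonpositive and yields
\[
\int_{|x|<R}H^r(x,t)\,dx\;\le\;\int_{|x|<R+st}H^r(x,0)\,dx+C\eps+C\int_0^t\!\!\int_{|y|<R+s(t-\tau)}H^r(y,\tau)\,dy\,d\tau .
\]
Gronwall's inequality then gives \eqref{ERREST} with $C=C(R,T,\nabla\bar u,M,K,L)$, and \eqref{CONVU} follows from \eqref{ERREST}, the hypothesis \eqref{CONVDATAU}, and the lower bound $H^r\ge\tfrac{\mu}{2}|U^\eps-M(\bar u)|^2$ upon letting $\eps\downarrow0$. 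The main obstacle is the bookkeeping inside the relative entropy identity---separating the errors that are linear in the transverse distance $U^\eps-M(u^\eps)$, which are absorbed by the $\tfrac1\eps$-dissipation into the $O(\eps)$ term, from those that are quadratic in $U^\eps-M(\bar u)$, which are absorbed by Gronwall; but since that computation is shared with Theorem \ref{WKSRCTHM}, the genuinely new work reduces to verifying that \eqref{SRCLIP} applies on the compact range of $M(\bar u)$ and that the resulting unsigned source term is quadratic, hence Gronwall-absorbable.
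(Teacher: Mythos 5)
Your proposal is correct and follows essentially the same route as the paper: the paper's proof of Theorem \ref{SRCLIPTHM} is precisely to rerun the argument of Theorem \ref{WKSRCTHM} verbatim, replacing the single step where $S\geqslant 0$ is discarded via \eqref{SRCDSP} by the observation (Remark \ref{NOGROWTHINFO}) that \eqref{SRCLIP} on the compact range of $M(\bar u)$ over the cone, combined with the global Lipschitz bound $|\D H(U^\eps)-\D H(M(\bar u))|\leqslant \mu'|U^\eps-M(\bar u)|$ from \eqref{EXTSYSBOUNDSHYP} and the lower bound $H^r\geqslant c_1|U^\eps-M(\bar u)|^2$, yields \eqref{SASMP} and lets the unsigned source term be absorbed into the Gronwall constant, which then depends on $L$. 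Your bookkeeping of the remaining terms $J_1$--$J_4$, the choice of $s$, and the cone/test-function localization all match the paper's argument.
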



\subsection{Uniformly bounded $\bar{u},\,\{U^{\eps}\}$ }

If a priori bounds on the family of solutions $\{U^{\eps}\}$ are available, then it is
possible to weaken the requirements $(i)-(ii)$ of Theorems \ref{WKSRCTHM},
\ref{SRCLIPTHM}. For example, one may weaken the assumption for $H$ to be uniformly
convex and $\D F_{\alpha}$, $\D^2_u f_{\alpha}$, and $\D^3_u \eta$ to be uniformly
bounded.

\begin{theorem}\label{locbdd}

 Let $\bar{u}$ be a smooth solution of \eqref{BLAW1}, defined on $\RR^d
\times [0,T]$, with initial data $\bar{u}_0$, and $\{U^{\eps}\}$ a family of admissible
weak solutions of \eqref{RLXBLAWSYS} on $\RR^d \times [0,T)$, with initial data
$U^{\eps}_0$.


\par\medskip

Assume \eqref{MAXWPAR}-\eqref{ENTCONSHYP}, \eqref{RLXDISSPHYPLOC} hold. Suppose that:
\begin{itemize}
\item[(i)] $\{U^{\eps}\}$, $\{M(u^{\eps})\}$ and  $M(\bar{u})$ take
values in a ball $B_r \subset \RR^N$.

\item[(ii)] $H(U)\in C^2(\RR^N)$ is strictly convex. $F(U)$, $\eta(u)$, $f(u)$ are
smooth.


\item[(iii)] The source $G(U)$ either satisfies \eqref{SRCDSP} {or} is locally Lipschitz.

\end{itemize}\vspace{1pt}
Then, for $R>0$ there holds the estimate \eqref{ERREST} for some constant
\begin{equation*}\vspace{-2pt}
C=C\bigl( R,T,B_r,\SP \|\nabla \bar{u}\|_{W^{1,\infty}(\mathcal{C}_{(T,R)})}\bigr) \SP
> \SP 0
\end{equation*}\vspace{-2pt}
and the constant
\begin{equation*}
s ={\mu_r}^{-1}\sup_{U,V \in B_r} \sum_{\alpha}\bigl |{\D}^2H(U)\SP \D{F_\alpha}(V)\bigr
|\SP ,
\end{equation*}
where $\mathcal{C}_{(T,R)}$ denotes a cone
\begin{equation*}
\mathcal{C}_{(T,R)}=\bigl\{(x,t):\;0<t<T,\;|x|<R+s(T  -t)\bigr\}\SP,
\end{equation*}
and $\mu_r>0$ is a constant such that
\begin{equation*}
\mu_r {\bf{I}} \, < \, \D^2 H(U) , \quad U\in B_r.
\end{equation*}
Moreover, if the initial data satisfy $\eqref{CONVDATAU}$, then \eqref{CONVU} holds.
\end{theorem}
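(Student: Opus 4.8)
The plan is to establish the relative entropy inequality \eqref{ERREST} by running the relative entropy computation of Theorem \ref{WKSRCTHM} (or \ref{SRCLIPTHM}), but replacing every use of the \emph{global} bounds \eqref{EXTSYSBOUNDSHYP}--\eqref{EQSYSBOUNDSHYP} by their \emph{local} counterparts, valid on the ball $B_r$ in which $\{U^\eps\}$, $\{M(u^\eps)\}$, and $M(\bar u)$ all take values by hypothesis $(i)$. First I would recall the definition of the relative entropy $H^r(U^\eps,M(\bar u))$ and its fluxes $Q^r_\alpha$ from Section \ref{S4}, and write down the integrated relative-entropy balance on truncated cones of the form $\mathcal{C}_{(T,R)}$. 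The geometry of the cone is dictated by the finite speed of propagation, which is why the cone slope must be chosen as
\begin{equation*}
s=\mu_r^{-1}\sup_{U,V\in B_r}\sum_\alpha\bigl|\D^2 H(U)\,\D F_\alpha(V)\bigr|,
\end{equation*}
the natural local analogue of the global speed $s=s(M,\mu')$ appearing in Theorem \ref{WKSRCTHM}: it is exactly the quantity controlling the flux of relative entropy across the lateral boundary of the cone once one uses the lower bound $\mu_r\mathbf{I}<\D^2H(U)$ on $B_r$.

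Next I would reproduce the core Gronwall estimate. Since $H\in C^2(\RR^N)$ is strictly convex and $B_r$ is compact, $\D^2 H$ is bounded below by $\mu_r\mathbf{I}$ and above by some $\mu_r'\mathbf{I}$ on $B_r$, so $H^r$ is comparable to $|U^\eps-M(\bar u)|^2$ there; likewise $F$, $\eta$, $f$ being smooth gives, on $B_r$, local Lipschitz and $C^2/C^3$ bounds playing the roles of $M$ and $K$. The relaxation term is handled by the \emph{local} dissipation hypothesis \eqref{RLXDISSPHYPLOC}: for $U,M(u)\in B_r$ one has
\begin{equation*}
-\bigl[\D H(U)-\D H(M(u))\bigr]\bigl[R(U)-R(M(u))\bigr]\geqslant \nu_r|U-M(u)|^2,
\end{equation*}
which furnishes the favorable $-\tfrac{1}{\eps}$-sign dissipation needed to absorb the relaxation contribution, exactly as \eqref{RLXDSP} does globally. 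The source term splits according to hypothesis $(iii)$: if $G$ satisfies \eqref{SRCDSP} its contribution has the right sign and is simply discarded, whereas if $G$ is merely locally Lipschitz then on the compact set $B_r$ it is Lipschitz with some constant $L_{B_r}$, and its contribution is bounded by $C\,H^r$ and fed into the Gronwall loop — this is precisely the mechanism already used in Theorem \ref{SRCLIPTHM}. All remaining error terms (from the Taylor-type expansion of the flux and entropy differences, and from the fact that $\bar u$ solves the equilibrium system \eqref{BLAW1} while $U^\eps$ solves \eqref{RLXBLAWSYS}) are controlled by $\|\nabla\bar u\|_{W^{1,\infty}(\mathcal{C}_{(T,R)})}$ together with the $B_r$-bounds, giving a constant $C=C(R,T,B_r,\|\nabla\bar u\|_{W^{1,\infty}(\mathcal{C}_{(T,R)})})$.

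Assembling these pieces yields a differential inequality of the form
\begin{equation*}
\frac{d}{dt}\int_{|x|<R+s(T-t)}H^r(x,t)\,dx\;\leqslant\; C\int_{|x|<R+s(T-t)}H^r(x,t)\,dx+C\eps,
\end{equation*}
and Gronwall's lemma then delivers \eqref{ERREST}; the convergence \eqref{CONVU} under the data assumption \eqref{CONVDATAU} follows by the lower comparability $H^r\geqslant c_r|U^\eps-M(\bar u)|^2$ on $B_r$, exactly as in the earlier theorems.

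I expect the main obstacle to be \emph{bookkeeping rather than a new idea}: one must verify that every constant produced in the proofs of Theorems \ref{WKSRCTHM} and \ref{SRCLIPTHM} that was previously derived from the global hypotheses \eqref{EXTSYSBOUNDSHYP}--\eqref{EQSYSBOUNDSHYP} and \eqref{RLXDSP} can in fact be regenerated purely from the $B_r$-localized data. The delicate point is the finite-speed-of-propagation / domain-of-dependence argument on the shrinking cone $\mathcal{C}_{(T,R)}$: one has to ensure that the a priori confinement of $\{U^\eps\}$ and $M(\bar u)$ to $B_r$ is genuinely available throughout the cone (it is an assumption here, $(i)$), so that the local bounds may be invoked uniformly along the flow and the lateral boundary term retains its favorable sign under the chosen slope $s$. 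Because hypothesis $(i)$ supplies the uniform confinement and $(ii)$--$(iii)$ supply the local regularity and dissipation, no genuinely new estimate is required — the theorem is essentially a localization of the two preceding results.
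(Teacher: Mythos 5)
Your proposal is correct and follows essentially the same route as the paper: the authors prove Theorem \ref{locbdd} precisely by rerunning the argument of Theorem \ref{WKSRCTHM} with the global hypotheses \eqref{EXTSYSBOUNDSHYP}--\eqref{EQSYSBOUNDSHYP} and \eqref{RLXDSP} replaced by their $B_r$-localized counterparts supplied by hypotheses $(i)$--$(iii)$ and \eqref{RLXDISSPHYPLOC}, with the cone slope $s$ chosen from the local bounds on $\D^2H$ and $\D F_\alpha$ exactly as you describe. No gaps.
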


\section{Relative Entropy}\label{S4}

To compare the solution $U^{\eps}$ of the relaxation system \eqref{RLXBLAWSYS} and the solution $\bar{u}$ of the
equilibrium system \eqref{BLAW1}, we employ the notion of the relative entropy \cite{Dafermos79}. We define the
relative entropy and entropy-fluxes \cite{Tz05} among the two solutions by
\begin{equation}\label{RENTPAIR}
\begin{aligned}
\!\!\!\!H^r(U^{\eps},M(\bar{u}))& :=H(U^{\eps})-H (M(\bar{u}))-{\D}H (M(\bar{u})) \bigl[U^{\eps}-M(\bar{u})\bigr]\\[1pt]
\!\!\!\!Q^r_\alpha(U^{\eps}, M(\bar{u}))& := Q_\alpha(U^{\eps})
-Q_\alpha(M(\bar{u}))-{\D}H(M(\bar{u}))
\bigl[F_{\alpha}(U^{\eps})-F_{\alpha}(M(\bar{u}))\bigr].
\end{aligned}
\end{equation}
By \eqref{RXENTPROP2} we have
\begin{equation}\label{DSPTERMDEF}
D(U^{\eps})  = - {\D}H(U^{\eps}) R(U^{\eps}) \SP \geqslant \SP 0,
\end{equation}
that expresses the entropy dissipation of the relaxation system \eqref{RLXBLAWSYS}. In
view of \eqref{GEOMIMPL}$_3$ and the fact that $R(M(u))=0$ for all $u\in\RR^n$,
$D(U^{\eps})$ may be written in an alternative form
\begin{equation}\label{DSPTERMALT}
\begin{aligned}
D(U^{\eps})  = -\bigl[{\D}H(U^{\eps}) - {\D}H(M(u^{\eps}))\bigr] \bigl[R(U^{\eps})-R(M(u^{\eps}))\bigr] \SP \geqslant
\SP 0
\end{aligned}
\end{equation}
where $u^{\eps}=\PP U^{\eps}$. Finally, we denote by
\begin{equation}\label{SRCDISSP}
{S}(U^{\eps},M(\bar{u})):=-\bigl[{\D}H(U^{\eps})-{\D}H(M(\bar{u}))\bigr] \bigl[G(U^{\eps})-G(M(\bar{u}))\bigr]\\
\end{equation}
the term (not necessarily dissipative) associated with the source $G(U)$.

\par\smallskip

\par\smallskip

 Let $U \equiv U^{\eps}(x,t)$ be a smooth solution of the relaxation system
\eqref{RLXBLAWSYS}, $u(x,t)=\PP U(x,t)$ be the conserved quantity associated to $U$ and
$\bar{u}(x,t)$ be a smooth solution of the equilibrium system \eqref{BLAW1}. Then the
relative entropy $H^r(U,M(\bar{u}))$ satisfies \ \

\begin{lemma}[\bf Relative entropy identity ]\label{RENTINTYLMM}
Suppose $\bar{u}(x,t)$ is a smooth solution of the equilibrium system
\eqref{BLAW1}, defined on $\RR^d \times [0,T]$, with initial data $\bar{u}_0(x)$. Let $U
\equiv U^{\eps}(x,t)$ be any admissible weak solution of the relaxation system
\eqref{RLXBLAWSYS} on $\RR^d \times [0,T)$, with initial data $U_0(x)$, and let
$u(x,t)=\PP U(x,t)$ denote the conserved quantity associated to $U$. Then the relative
entropy $H^r(U,M(\bar{u}))$ satisfies
\begin{equation}\label{RELENTINTYWEAK}
\begin{aligned}
& \int_{0}^T \int_{\RR^d}   \Big\{ - \del_t \varphi  \, H^r(U,\bar{u}) - \del_{\alpha}
\varphi \, Q^r_{\alpha}(U,\bar{u}) \Big\} \SP dx\SP dt \\
&\qquad\qquad - \int_{\RR^d} \varphi(x,0) \, H^{r}(U_0, M(\bar{u}_0)) \SP dx \\
&\leqslant  \SP \int_{0}^T \int_{\RR^d} \varphi \, \Big\{- \frac{1}{\eps}D - S+J_1 + J_2 + J_3+J_4 \Big\} \SP dx\SP dt \\
\end{aligned}
\end{equation}
for every nonnegative Lipschitz test function $\varphi(x,t)$, with compact support in
$\RR^d \times [0,T)$, where
\begin{equation}\label{JDEFS}
\begin{aligned}
J_1 &:= - \bigl( {\D}^2_u\eta(\bar{u}) \del_\alpha \bar{u} \bigr)^{\T}
\bigl(f_\alpha(u)-f_\alpha(\bar{u})-
{\D_u}f_\alpha(\bar{u})(u-\bar{u})\bigr)\\[3pt]
J_2 &:= - \bigl({\D}^2_{u}\eta(\bar{u}) \del_\alpha \bar{u} \bigr)^{\T}  \PP\bigl[F_\alpha(U)-F_\alpha(M(u))\bigr]\\[3pt]
J_3 &:=  \SP g(\bar{u})^{\T} \Bigl({\D}_{u}{\eta}(u)^{\T}-{\D}_u\eta(\bar{u})^{\T}-{\D}_u^2\eta(\bar{u})^{\T}(u-\bar{u})\Bigr)\\[2pt]
J_4 &:=  \SP \bigl[{\D}H(U)-{\D}H(M(u))\bigr] \SP G(M(\bar{u}))\,.
\end{aligned}
\end{equation}
If,  in addition, $\{U^{\eps}\}$
are smooth solutions, then they identically satisfy \eqref{RLXADMWEAK}
as equality. As a consequence, the inequality
\eqref{RELENTINTYWEAK} for the relative entropy $H^r$ becomes the identity
\begin{equation}\label{RELENTID}
\del_t H^r + \del_{\alpha} Q^r_{\alpha} + \frac{1}{\eps}D + {S}=J_1 + J_2 + J_3 + J_4\,,
\quad (x,t)\in \RR^d \times [0,T).
\end{equation}

\end{lemma}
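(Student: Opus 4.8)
The plan is to first establish the pointwise identity \eqref{RELENTID} under the assumption that $U$ is a \emph{smooth} solution, and then recover the weak inequality \eqref{RELENTINTYWEAK} by replacing the exact entropy balance for $U$ with the admissibility inequality \eqref{RLXADMWEAK} and realizing the affine correction term through the weak formulation \eqref{CAUCHYPROBWEAK}. Throughout I abbreviate $\bar U := M(\bar u)$ and $u := \PP U$.

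For the smooth case I would differentiate the two definitions in \eqref{RENTPAIR} and organize
\[
\del_t H^r + \del_\alpha Q^r_\alpha = \bigl[\del_t H(U) + \del_\alpha Q_\alpha(U)\bigr] - \bigl[\del_t H(\bar U) + \del_\alpha Q_\alpha(\bar U)\bigr] - T,
\]
where $T$ collects the derivatives of the correction $\D H(\bar U)\,[\,\cdot-\bar U\,]$. The first bracket is handled by the relaxation entropy identity \eqref{RXENTID}, the second by the equilibrium entropy identity \eqref{BLAWENTID2}. In $T$ I would group the terms in which $\D H(\bar U)$ multiplies $\del_t U + \del_\alpha F_\alpha(U)$ and substitute the PDE \eqref{RLXBLAWSYS}. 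Two structural facts then do the essential cleaning. First, since $\PP R(U)=0$ by \eqref{PROJPROP} and $\D H(\bar U)V=0$ whenever $\PP V=0$ by $\eqref{GEOMIMPL}_3$, the stiff contribution collapses to exactly $-\frac{1}{\eps}D$ in the form \eqref{DSPTERMALT}. Second, projecting $\del_t\bar U + \del_\alpha F_\alpha(\bar U) - G(\bar U)$ by $\PP$ and using the equilibrium equation together with \eqref{BLAWFLXSRC} shows this vector lies in $\ker\PP$, so $\eqref{GEOMIMPL}_3$ gives $\D H(\bar U)[\del_t\bar U+\del_\alpha F_\alpha(\bar U)] = \D H(\bar U)G(\bar U)$, cancelling one source contribution.

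What remains are the Hessian terms $-(\del_t\bar U)^\T\D^2H(\bar U)(U-\bar U)$ and $-(\del_\alpha\bar U)^\T\D^2H(\bar U)(F_\alpha(U)-F_\alpha(\bar U))$ and a leftover source term; turning these into $J_1$--$J_4$ is the crux. Differentiating the entropy-consistency relation \eqref{ENTCONSIMPL} in $u$ yields the bridge identity $\D^2_u\eta(\bar u)\,\PP A = \D M(\bar u)^\T\D^2H(\bar U)\,A$ for every $A\in\RR^N$, which replaces $\D^2H(\bar U)$ by the equilibrium Hessian $\D^2_u\eta(\bar u)$ and brings in $\PP(U-\bar U)=u-\bar u$ and, after writing $\PP[F_\alpha(U)-F_\alpha(\bar U)] = f_\alpha(u)-f_\alpha(\bar u)+\PP[F_\alpha(U)-F_\alpha(M(u))]$, the two contractions appearing in $J_1,J_2$. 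I would then substitute $\del_t\bar u = -\del_\alpha f_\alpha(\bar u)+g(\bar u)$ into the time-Hessian term; the resulting convective piece cancels the $\D_u f_\alpha$ term of $J_1$ precisely because $\D^2_u\eta(\bar u)\,\D_u f_\alpha(\bar u)$ is symmetric (the standard compatibility of the entropy pair $\eta$-$q_\alpha$), and the surviving $g(\bar u)$ piece combines with the leftover source term — rewritten via \eqref{ENTCONSIMPL} and $\PP G(M(\bar u))=g(\bar u)$ — to produce exactly $J_3$, $J_4$, and the source defect $-S$ of \eqref{SRCDISSP}. Assembling the pieces gives \eqref{RELENTID}.

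Finally, for a merely admissible weak $U$ the same algebra is carried out in integrated form. I would test \eqref{CAUCHYPROBWEAK} with the Lipschitz, compactly supported matrix test function $\widehat\Phi = \varphi\,\D H(M(\bar u))$ (Lipschitz because $\bar u$ is smooth and $\varphi$ is Lipschitz) to realize weakly the correction $\del_t[\D H(\bar U)U]+\del_\alpha[\D H(\bar U)F_\alpha(U)]$, treat the smooth $\bar U$-contributions by ordinary integration by parts (whose $t=0$ boundary terms assemble into the initial relative entropy $H^r(U_0,M(\bar u_0))$), and use the admissibility inequality \eqref{RLXADMWEAK} for the genuine entropy terms $H(U),Q_\alpha(U)$. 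Since every pointwise identity above involves only the smooth field $\bar u$ and purely algebraic structure, it transfers to the integrands unchanged; the sole inequality enters through \eqref{RLXADMWEAK} and produces the ``$\leqslant$'' in \eqref{RELENTINTYWEAK}. I expect the main obstacle to be the bookkeeping in the Hessian step — keeping the two $\D^2_u\eta$-contractions aligned and invoking the symmetry of $\D^2_u\eta\,\D_u f_\alpha$ at the right moment — rather than any analytic difficulty, the weak-solution passage being routine once the smooth identity is in hand.
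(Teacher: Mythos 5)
Your proposal is correct and follows essentially the same route as the paper: the test function $\widehat\Phi=\varphi\,\D H(M(\bar u))$ you propose coincides, by \eqref{ENTCONSIMPL}, with the paper's choice $\varphi\,\D_u\eta(\bar u)\,\PP$ in \eqref{PROJRLXSYS}--\eqref{BLAWWEAK}, and your bridge identity, the use of the symmetry of $\D^2_u\eta\,\D_uf_\alpha$ together with the equilibrium equation, and the regrouping of the source terms into $-S$, $J_3$, $J_4$ reproduce the paper's steps \eqref{TWOTERMS} and \eqref{J3TERM}. The only (immaterial) difference is organizational: the paper works in the integrated weak formulation from the outset rather than first deriving the pointwise identity for smooth $U$.
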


\begin{proof} Let us fix any nonnegative, Lipschitz continuous test function $\varphi(x,t)$,
compactly supported in $\RR^d \times [0,T)$. Since  $\bar{u}$ is smooth, from
\eqref{BLAWENTID2} it follows that $\eta(\bar{u})=H(M(\bar{u}))$ satisfies the entropy
identity
\begin{equation*}
\begin{aligned}
&\del_t H(M(\bar{u}))+\del_\alpha Q_\alpha(M(\bar{u}))={\D}H(M(\bar{u})) G(M(\bar{u}))\\
\end{aligned}
\end{equation*}
which in its the weak form reads
\begin{equation}\label{BLAWENTWEAK}
\begin{aligned}
&\int_{0}^T \int_{\RR^d} \, \Big(\del_t \varphi \, H(M(\bar{u})) +
\del_{\alpha} \varphi \, {Q}_{\alpha}(M(\bar{u}))  \Big)\SP dx\SP dt \\
&+ \int_{\RR^d} \varphi(x,0) \, H(M(\bar{u}_0))) \SP dx + \int_0^T\int_{\RR^d} \varphi \,
\D H(M(\bar{u})) G(M(\bar{u})) \SP dx\SP dt  = 0\,.
\end{aligned}
\end{equation}
Recall that $U$, an admissible weak solution of \eqref{RLXBLAWSYS}, with initial data
$U_0$, must satisfy the inequality \eqref{RLXADMWEAK}. Thus, upon subtracting
\eqref{BLAWENTWEAK} from \eqref{RLXADMWEAK}, we obtain
\begin{equation}\label{ENTIDDIFF}
\begin{aligned}
&\int_{0}^T \int_{\RR^d} \, \Big\{\del_t \varphi \, (H(U)-H(M(\bar{u}))) +
\del_{\alpha} \varphi \, (Q_{\alpha}(U)-Q_{\alpha}(M(\bar{u})))\Big\} \SP dx\SP dt\\
&+\int_0^T\int_{\RR^d} \varphi \Big\{ {\D}H(U) \Big[\frac{1}{\eps}R(U)+G(U)\Big]
-{\D}H(M(\bar{u})) G(M(\bar{u}))\Big\} dx \SP dt \\
&+ \int_{\RR^d} \varphi(x,0) \, (H(U_0)-H(M(\bar{u}_0))) \SP dx \,\geqslant \, 0\,.
\end{aligned}
\end{equation}


\par\smallskip

Next, recalling that $\bar{u}$ is a smooth solution of
\begin{equation}\label{BLAW2}
\del_t \bar{u}+\del_\alpha\PP F_\alpha\bigl(M(\bar{u})\bigr)=\PP G\big(M(\bar{u})\big)
\end{equation}
and that $\PP M(\bar{u})=\bar{u}$ we obtain the identity
\begin{equation}\label{BLAWWEAK}
\begin{aligned}
&\int_{0}^T \int_{\RR^d} \, \Big\{\del_t \Phi \, \PP M(\bar{u})  + \del_{\alpha}\Phi
\, \PP F_{\alpha}(M(\bar{u})) \Big\} \SP dx\SP dt \\
&\quad + \int_{\RR^d} \Phi(x,0) \, \PP M(\bar{u}_0(x)) \SP dx + \int_0^T\int_{\RR^d}
\Phi(x,t)
 \, \PP G(M(\bar{u})) \SP dx\SP dt = 0
\end{aligned}
\end{equation}
where $\Phi(x,t)$ is a Lipschitz continuous vector field with compact support in $\RR^d
\times [0,T)$ and values in $\mathbb{M}^{1\times n}$. Also, since $U$ is a weak solution
of \eqref{RLXBLAWSYS}, it must satisfy \eqref{CAUCHYPROBWEAK} which, with $\widehat{\Phi}
= \Phi \, \PP \in \mathbb{M}^{1\times N}$, reads
\begin{equation}\label{PROJRLXSYS}
\begin{aligned}
&\int_{0}^T \int_{\RR^d} \, \Big\{\del_t \Phi \, \PP  U  + \del_{\alpha}
\Phi \, \PP F_{\alpha}(U) \Big\} \SP dx\SP dt \\
&\qquad\quad + \int_{\RR^d} \Phi(x,0) \, \PP U_0(x) \SP dx + \int_0^T\int_{\RR^d}
\Phi(x,t)
 \, \PP G(U) \SP dx\SP dt = 0
\end{aligned}
\end{equation}
in view of the property $\PP R(U)=0$.


\par\smallskip

Now, we subtract \eqref{PROJRLXSYS} from \eqref{BLAWWEAK}, set the Lipschitz continuous
vector field $\Phi = \varphi \,\D_u \eta(\bar{u})$,
and recall the geometric relation \eqref{ENTCONSIMPL}, to get
\begin{equation}\label{PRODENTDIFF}
\begin{aligned}
&\int_{0}^T \int_{\RR^d} \Big\{ \del_t \varphi \, \D H(M(\bar{u})) \big[U - M(\bar{u})\big] \\
&\qquad\qquad  + \del_{\alpha} \varphi \, \D H(M(\bar{u}))
\big[F_{\alpha}(U) - F_{\alpha}(M(\bar{u}))\big] \Big\} \SP dx\SP dt \\
&+\int_{0}^T \int_{\RR^d} \varphi \, \Big\{ \bigl({\D}^2_u \eta(\bar{u}) \SP \del_t
\bar{u} \bigr)^{\T} \PP\bigl[U-M(\bar{u})\bigr] \\
&\qquad\qquad\qquad +  \bigl({\D}^2_u \eta(\bar{u}) \SP \del_{\alpha} \bar{u}
\bigr)^{\T} \PP\bigl[F_\alpha(U)-F_\alpha(M(\bar{u}))\bigr]\\
&\qquad\qquad\qquad  +\D H (M(\bar{u})) \big[ G(U) - G(M(\bar{u})\big]\Big\} \SP dx\SP dt \\
&+ \int_{\RR^d} \varphi(x,0) \,\D H(M(\bar{u}_0)) \big[U_0(x)-M(\bar{u}_0)\big] \SP dx \,
= \, 0
\end{aligned}
\end{equation}
\par\smallskip


The existence of an entropy pair $\eta-q_{\alpha}$ is equivalent
to the property
\begin{equation*}
\D_u^2\eta(v)\D_u f_\alpha(v)=\D_u f_\alpha(v)^{\T} \D_u^2 \eta(v), \quad \forall v \in
\RR^n
\end{equation*}
and therefore, in view of \eqref{BLAW2}, we have
\begin{equation*}
\begin{aligned}
    {\D}^2_u \eta(\bar{u}) \SP \del_t\bar{u} &= {\D}^2_u
    \eta(\bar{u})\Bigl( -\D_u f_{\alpha}(\bar{u}) \SP \del_{\alpha} \bar{u} +
    g(\bar{u})\Bigr)\\
    &= -\D_u f_{\alpha}(\bar{u})^{\T} {\D}^2_u\eta(\bar{u})\SP \del_{\alpha} \bar{u} +
    {\D}^2_u \eta(\bar{u}) g(\bar{u}).
\end{aligned}
\end{equation*}
Hence we must have
\begin{equation}\label{TWOTERMS}
\begin{aligned}
&         \bigl({\D}^2_u \eta(\bar{u}) \SP \del_t \bar{u}\bigr)^{\T} \PP\bigl[U-M(\bar{u})\bigr]
+\bigl({\D}^2_u \eta(\bar{u})\SP \del_{\alpha} \bar{u} \bigr)^{\T} \PP\bigl[F_\alpha(U)-F_\alpha(M(\bar{u}))\bigr]\\[1pt]
&  \quad =\bigl({\D}^2_u \eta(\bar{u})
\SP\del_{\alpha}\bar{u}\bigr)^{\T}\Bigl(f_\alpha(u)-f_\alpha(\bar{u})-\D_u
f_\alpha(\bar{u})(u-\bar{u})\Bigr)\\[1pt]
&\qquad  +  \bigl({\D}^2_u \eta(\bar{u}) \SP\del_{\alpha}\bar{u}
\bigr)^{\T}\PP\bigl[F_\alpha(U)-F_\alpha(M(u))\bigr]+\bigl(\D_u^2\SP\eta(\bar{u})g(\bar{u})\bigr)^{\T}(u-\bar{u})
\end{aligned}
\end{equation}
where we used \eqref{BLAWFLXSRC}, the fact that $u=\PP U$, and
$\PP M(\bar{u})=\bar{u}$.

\par\smallskip
Combining  \eqref{ENTIDDIFF}  with \eqref{PRODENTDIFF} - \eqref{TWOTERMS}
and recalling \eqref{DSPTERMDEF}, \eqref{DSPTERMALT} we obtain
%
\begin{equation}\label{RELENTIDINT}
\begin{aligned}
&\int_{0}^T \int_{\RR^d} \Big\{ \del_t \varphi \, H^r(U,\bar{u}) + \del_{\alpha} \varphi
\, Q^r_{\alpha}(U,\bar{u}) \Big\} \SP dx\SP dt \\
&+\int_{0}^T \int_{\RR^d} \varphi \, \Big\{ -\frac{1}{\eps}D - {S}+J_1 + J_2 + J_4
- \bigl({\D}^2_u\eta(\bar{u})g(\bar{u})\bigr)^{\T}(u-\bar{u}) \\
&\,\,\qquad\qquad\qquad + \bigl[{\D}H(M(u))-{\D}H(M(\bar{u}))\bigr] G(M(\bar{u})) \SP \Big\} \SP dx\SP dt \\
&+ \int_{\RR^d} \varphi(x,0) \, H^{r}(U_0, M(\bar{u}_0))\SP dx \, \geqslant  \, 0 \,.
\end{aligned}
\end{equation}
Observe that, in view of \eqref{BLAWFLXSRC}, \eqref{ENTCONSIMPL}, we have
\begin{equation*}
\begin{aligned}
\bigl[{\D}H(M(u))-{\D}H(M(\bar{u}))\bigr] G(M(\bar{u})) &=
\bigl({\D}_u \eta(u)-{\D}_u \eta(\bar{u})\bigr) g(\bar{u})
\end{aligned}
\end{equation*}
and hence
\begin{equation}\label{J3TERM}
\begin{aligned}
&\bigl[{\D}H(M(u))-{\D}H(M(\bar{u}))\bigr]G(M(\bar{u}))-\bigl({\D}^2_u\eta(\bar{u})g(\bar{u})\bigr)^{\T}(u-\bar{u})\\
&\quad\qquad\qquad  =   g(\bar{u})^{\T} \Bigl(D_u\eta(u)^{\T}-\D_u\eta(\bar{u})^{\T}-{\D}^2_u\eta(\bar{u})^{\T}(u-\bar{u})\Bigr)=J_3.\\
\end{aligned}
\end{equation}
Then from \eqref{RELENTIDINT}, \eqref{J3TERM} we get the desired inequality
\eqref{RELENTINTYWEAK}.
\end{proof}

\par
\section{Proof of Theorems via  Error Estimates}\label{S5}

To investigate the convergence of solutions $\{U^{\eps}\}$ of the relaxation system
\eqref{RLXBLAWSYS} to $M(\bar{u})$ in the smooth regime, one employs the
inequality \eqref{RELENTINTYWEAK} derived in the previous section. The preliminary
analysis of the  inequality indicates that the evolution of
$H^r(\cdot,t)$ depends heavily on the properties of the entropy $H(U)$, flux $F(U)$,
dissipative source $R(U)$ and, especially, the source $G(U)$.

\par\smallskip

\subsection{Proof of Theorem \ref{WKSRCTHM}}\label{S5.1}
\begin{proof}
The argument follows along the lines of [10, Theorem 5.2.1]. Fix
$\eps>0$. Since $U^{\eps}$ is an admissible weak solution of \eqref{RLXBLAWSYS} it must
satisfy \eqref{RLXADMWEAK}. Then \cite[Lemma 1.3.3]{Dafermos10} implies that the map $t
\to H(U^{\eps}(\cdot,t))$ is continuous on $[0,T)\backslash \mathcal{F}$ in
$L^{\infty}(\mathcal{A})$ weak$^*$, for any compact subset $\mathcal{A}\subset \RR^d$,
where $\mathcal{F}$ is at most countable.

\par\smallskip

We now fix $R>0$ and any point $t\in[0,T)$ of $L^{\infty}\,$weak$^*$
continuity of $H(U^{\eps}(\cdot,t))$ and let $\mathcal{C}_{(t,R)}$ denote the cone
\begin{equation*}
\mathcal{C}_{(t,R)}=\Bigl\{(x,\tau):\;0<\tau<t,\;|x|<R+s(t-\tau)\Bigr\}
\end{equation*}
where $s$ is a constant selected later. To prove the statement of the theorem we need to
monitor the evolution of the quantity
\begin{equation*}
\Psi(\tau)=\Psi(\tau \SP; \SP t, R):=\int_{|x|<R+s(t-\tau)}H^r(x,\tau)\,dx,\quad 0\leqslant\tau\leqslant t.
\end{equation*}

\par\smallskip

Clearly $U^{\eps}$, $\bar{u}$ satisfy the assumptions of Lemma \ref{RENTINTYLMM}
and hence there holds the relative entropy inequality
\begin{equation}\label{RELENTIDWKFORM}
\begin{aligned}
&\int_0^T\int_{\RR^d} \Bigl\{-H^r(x,\tau)\,\del_t\varphi - Q_\alpha^r(x,\tau)\,
\del_\alpha\varphi+\frac{1}{\eps}\varphi D \Bigr\} \SP dx \SP d\tau \\
& -\int_{\RR^d} H^r(x,0)\SP \varphi(x,0) \SP dx \SP \leqslant \SP
           \int_0^T\int_{\RR^d}\varphi \, \Big\{-S+J_{1}+J_{2}+J_{3}+J_{4}\Big\} \SP dx \SP d\tau
\end{aligned}
\end{equation}
where $D$, ${S}$, $J_k$, $k=1,\dots,4$ defined by \eqref{SRCDISSP} and $\varphi$ is
nonnegative Lipschitz continuous function compactly supported in $\RR^d\times[0,T)$.

\par\smallskip

Since the family $\{U^{\eps}\}$ together with $\bar{u}$ are not necessarily
uniformly bounded, to handle the flux term $Q^r$ 
we need to exploit the uniform convexity of the entropy $H(U)$.
From \eqref{RENTPAIR} and the assumption $(i)$ it follows that
there exists $c_1>0$ independent of $\eps$ such that
\begin{equation}\label{RELENTBOUND}
H^r\bigl(U^\eps,M(\bar{u})\bigr)\,\geqslant\,c_1\bigl|U^\eps-M(\bar{u})\bigr|^2.
\end{equation}

\par\smallskip

Now, by $\eqref{RENTPAIR}_2$ the relative entropy flux $Q_\alpha^r$ maybe written as
\begin{equation}\label{RELFLUXID1}
\begin{aligned}
Q_\alpha^r(U^{\eps},M(\bar{u}))&=\int_0^1 \SP {\D} Q_\alpha(\hat{U}(\beta)) \bigl[U^{\eps}-M(\bar{u})\bigr]\, d\beta\\
&\qquad - \int_0^1 {\D}H(M(\bar{u})) \Bigl[\D F_\alpha (\hat{U}(\beta))\bigl[U^{\eps}-M(\bar{u})\bigr]\Bigr] \, d\beta
\end{aligned}
\end{equation}
where $\widehat{U}(\beta):=\beta U^{\eps}+(1-\beta)M(\bar{u})$. Recalling \eqref{RXENTPROP1} we have
\begin{equation*}
\D{Q_\alpha}(\widehat{U}) \bigl[U^{\eps}-M(\bar{u})\bigr]\SP = \SP {\D}H(\widehat{U}) \SP \D F_\alpha
(\widehat{U})\bigl[U^{\eps}-M(\bar{u}) \bigr]
\end{equation*}
and hence \eqref{RELFLUXID1} becomes
\begin{equation}\label{RELFLUXID2}
\begin{aligned}
Q_\alpha^r &= \int_0^1\Bigl[{\D}H(\hat{U}(\beta))-{\D}H(M(\bar{u}))\Bigr] \Bigl[\D{F_\alpha}(\widehat{U}(\beta))\bigl[U^{\eps}-M(\bar{u})\bigr]\Bigr] \SP d\beta\\
&           = \bigl[U^{\eps}-M(\bar{u})\bigr]^{\T} \biggr(\int_0^1\int_0^1 \SP \beta \SP {\D}^2H(\widetilde{U})\SP
\D{F_\alpha}(\widehat{U}) \DSP d\gamma \SP d\beta \biggl) \bigl[U^{\eps}-M(\bar{u})\bigr]
\end{aligned}
\end{equation}
where
${\widetilde{U}}(\beta,\gamma):=\beta\gamma\SP{U^{\eps}}+(1-\beta\gamma)M(\bar{u})$.
Then, from \eqref{RELFLUXID2} and $(i)$ we conclude that
\begin{equation}\label{RELFLUXBOUND}
\sum_{\alpha}|Q_\alpha^r|\,\leqslant\,c_2|U^{\eps}-M(\bar{u})|^2
\end{equation}
for some $c_2=c_2(M,\mu')>0$ independent of $\eps$. Hence, in view of \eqref{RELENTBOUND}
and \eqref{RELFLUXBOUND}, we can choose $s>0$ such that
\begin{equation}\label{RELENTFLUXBOUND}
sH^r(x,\tau)+\sum_\alpha\frac{x_\alpha}{|x|}Q_\alpha^r(x,\tau)>0, \quad (x,\tau) \in
\RR^d \times[0,T{\color{cstblue})}.
\end{equation}

\par\smallskip

Next, take $\delta>0$ such that $t+\delta<T$ and select the test function
$\varphi=\varphi(x,\tau)$ as follows (cf. \cite[Theorem 5.3.1]{Dafermos10})
$$
\varphi(x,\tau)=\theta(\tau)\SP\gamma(x,\tau)
$$ where
\begin{equation*}
\begin{aligned}
\theta(\tau)&= \left\{
\begin{aligned}
&1,\qquad\qquad\qquad 0\leqslant\tau<t\\
&1-\frac{1}{\delta}(\tau-t),\quad t\leqslant\tau\leqslant t+\delta\\
&0,\qquad\qquad\qquad t+\delta\leqslant\tau,
\end{aligned}
\right.\\[3pt]
\gamma(x,\tau)&= \left\{
\begin{aligned}
&1,& &\tau>0,\,|x|-R-s(t-\tau)<0\\
&1-\frac{1}{\delta}\bigl(|x|-R-s(t-\tau)\bigr),& &\tau>0,\,0<|x|-s(t-\tau)-R<\delta\\
&0,& &\tau>0, \, \delta<|x|-R-s(t-\tau)
\end{aligned}
\right.
\end{aligned}
\end{equation*}
and use it in \eqref{RELENTIDWKFORM}. This gives
\begin{equation}\label{RELENTEST1}\vspace{3pt}
\begin{aligned}
&\frac{1}{\delta} \int_t^{t+\delta} \!\!\! \int_{|x|<R}H^r(x,\tau)\,dx\SP d\tau - \int_{|x|<R+st}H^r(x,0)\,dx \\[3pt]
&\qquad +  \frac{1}{\delta}\int_0^t\int_{0<|x|-R-s(t-\tau)<\delta} \Bigl(sH^r(x,\tau)+\sum_\alpha\frac{x_\alpha}{|x|}Q_\alpha^r(x,\tau)\Bigr)\,dx\SP
d\tau\\[3pt]
&\qquad +  \frac{1}{\eps}\int_0^t\int_{|x|<R+s(t-\tau)}\!\!\!D\,dx\SP d\tau+O(\delta)\\[3pt]
&       = \SP \int_0^t\int_{|x|<R+s(t-\tau)}\!\!\!\bigl(-{S}+J_{1}+J_{2}+J_{3}+J_{4}\bigr) \SP dx\SP d\tau.\\
\end{aligned}
\end{equation}
We next let $\delta\rightarrow 0^+$ in \eqref{RELENTEST1}. The second
integrals in \eqref{RELENTEST1} is nonnegative in view of \eqref{RELENTFLUXBOUND}.
Recalling \eqref{RLXDSP} and using the fact that
$H^r(U^{\eps}(\cdot,\tau),\bar{u}(\cdot,\tau))$ is weak$^*$ continuous in $L^{\infty}$ at
$\tau=t$ we conclude
\begin{equation}\label{RELENTEST2}
\begin{aligned}
&\int_{|x|<R}H^r(x,t)\SP dx+\frac{\nu}{\eps}\int\!\int_{\mathcal{C}_{(t,R)}} \! |U^{\eps}-M(u^{\eps})|^2 \SP dx\SP
d\tau + \int\!\int_{{\mathcal{C}_{(t,R)}}} \!{S} \, \SP dx\SP
d\tau\\[3pt]
&\quad \leqslant \, \int_{|x|<R+st}\!\!\! H^r(x,0) \SP dx
+\int\!\int_{{\mathcal{C}_{(t,R)}}}\!\Bigl(|J_{1}|+|J_{2}|+|J_{3}|+|J_{4}|\Bigr)\SP dx\SP
d\tau. \\
\end{aligned}
\end{equation}

\par\smallskip

We next estimate the terms on the right-hand side of \eqref{RELENTEST2}. Recalling
\eqref{JDEFS} and using $(i)$, $(ii)$, and the Young's inequality we obtain
\begin{equation}\label{J1J2J3J4EST}
\begin{aligned}
\int\int_{\mathcal{C}_{(t,R)}} \SP |J_{1}|+|J_{3}| \, dx\SP d\tau \SP
 \SP &\leqslant \SP C\int\!\int_{\mathcal{C}_{(t,R)}}\bigl|U^{\eps}-M(\bar{u})\bigr|^2 \SP dx\SP d\tau\\
\int\int_{\mathcal{C}_{(t,R)}} \SP |J_{2}|+|J_{4}| \, dx \SP d\tau \SP & \SP \leqslant
\SP\frac{\nu}{\eps}\int\!\int_{\mathcal{C}_{(t,R)}}\bigl|U^{\eps}-M(u)\bigr|^2\,dx+C\eps
\SP,
\end{aligned}
\end{equation}
where the constant $C=C(t,R,u,\nabla\bar{u},M,K)>0$ depends on the norms
\begin{equation}
\|\bar{u}\|_{W^{1,\infty}(\mathcal{C}_{(t,R)})}, \quad \|\bar{u}\|_{W^{1,2}(\mathcal{C}_{(t,R)})},
\end{equation}
and constants $M,K$ are introduced in $(i),(ii)$. Finally, by \eqref{SRCDSP}
\begin{equation}\label{SRCDISSPINTY}
\int\!\int_{\mathcal{C}_{(t,R)}} {S}(U^{\eps},M(\bar{u})) \, dx\SP d\tau \SP \geqslant
\SP 0.
\end{equation}
Then, combining \eqref{RELENTEST2}-\eqref{SRCDISSPINTY} and recalling $(i)_1$ we conclude
\begin{equation*}
\Psi(t \SP; \SP t, R)\,\leqslant\,\Psi(0 \SP; \SP t, R)+C\Bigl(\eps+\int_0^t\Psi(\tau \SP ; \SP t, R) \, d\tau\Bigr).
\end{equation*}
Since $R>0$ and $t\in[0,T]$ in the above inequality are arbitrary, we conclude via the Gronwall lemma.
\end{proof}

\begin{remark}\label{NOGROWTHINFO}\rm
The terms $J_1$, $J_3$ (in the proof of Theorem \ref{WKSRCTHM}) are bounded by $C
H^r(U^{\eps},M(\bar{u}))$, in view of \eqref{EXTSYSBOUNDSHYP}$_1$,
\eqref{J1J2J3J4EST}$_1$. This is one of the key features of the calculations that
eventually leads to the use of the Gronwall lemma.

\par\smallskip

The term $S(U^{\eps},M(\bar{u}))$ has a ``quadratic" structure similar to that $J_1,J_3$
and thus, one may think that there is no need in requiring \eqref{SRCDSP}. To this end,
we point out that if \eqref{SRCDSP} does not hold, then one has to make sure that
\begin{equation}\label{SASMP}
\int\!\int_{\mathcal{C}_{(t,R)}} S(U^{\eps},M(\bar{u})) \, dx \SP \leqslant \SP c\int\!\int_{\mathcal{C}_{(t,R)}} H^r(U^{\eps},M(\bar{u})) \, dx\\
\end{equation}
with $c=c(t,R)>0$ independent of $\eps$ (in order to exploit Gronwall lemma), and this is
not true in general. In this case, to ensure \eqref{SASMP}, one has to impose certain
regularity conditions on the source function $G(U)$.


\end{remark}

\subsection{Proof of Theorem \ref{SRCLIPTHM}}

In this section we drop the assumption \eqref{SRCDSP} and following Remark \ref{NOGROWTHINFO} require the source $G(U)$
to satisfy \eqref{SRCLIP}. This will ensure \eqref{SASMP} and thus following the analysis in the proof of Theorem \ref{WKSRCTHM} we obtain the result.

\subsection{Proof of Theorem \ref{locbdd}}
In the previous two sections we established convergence of weak solutions of the relaxation system \eqref{RLXBLAWSYS}
to the equilibrium system via the error estimate on the cone $\mathcal{C}_{(R,t)}$. Observe, however, that the bounds
imposed on $\D^2 H$, $\D^3_u \eta$ and $\D_u f_{\alpha}$, $\D F_{\alpha}$ in Theorem \ref{WKSRCTHM} are global. In
particular, the requirement that $H$ is uniformly convex on $\RR^N$ (which is used to handle the flux $Q^r_{\alpha}$ on
the boundary of the cone, see \eqref{RELENTFLUXBOUND}) is a very stringent condition that narrows significantly the
class of systems to which our error analysis may be applied.

\par\smallskip

Let us note at this point that if a priori (local) bounds on the family of solutions
$\{U^{\eps}\}$ are available, then it is possible to weaken the requirements $(i)-(ii)$
of Theorems \ref{WKSRCTHM}, \ref{SRCLIPTHM}. For example, one may weaken the assumption
for $H$ to be uniformly convex and $\D F_{\alpha}$, $\D^2_u f_{\alpha}$, and $\D^3_u
\eta$ to be uniformly bounded. This is indeed the case and the proof of Theorem
\ref{locbdd} follows using the line of argument presented in the proof of Theorem
\ref{WKSRCTHM}.

\par\smallskip

\section{Application to Elasticity}\label{S6}

Consider the relaxation of the (isothermal/isentropic) elasticity system:
\begin{equation}\label{ELASTSYST}
\begin{aligned}
\left(\begin{aligned}
&u\\
&v\\
\end{aligned}\right)_t -
\left(\begin{aligned}
&v\\
\sigma&(u)\\
\end{aligned}\right)_x = g(u,v)=\left(\begin{aligned}
&0\\
g_2(&u,v)\\
\end{aligned}\right)
\end{aligned}
\end{equation}
with the stress $\sigma(u)$ such that
\begin{equation}\label{GSTRESS}
\sigma(0)=0 \quad \mbox{and} \quad  0 < \gamma < \sigma'(u) < \Gamma\quad \mbox{for all} \quad u \in \RR^n.
\end{equation}
We assume that the source $g(u,v)$ satisfies one of the following:
\begin{itemize}
\item [$(i)$] {\it Either} $g$ is independent of $u$, that is $g(u,v)=g(v)$, and
satisfies
\begin{equation}\label{WKDSPELAST}
    \bigl(g_2(v)-g_2(\bar{v})\bigr)\bigl(v-\bar{v}\bigr) \leqslant 0, \quad \forall \SP
    v,\bar{v}\in \RR,
\end{equation}
\end{itemize}
\begin{itemize}
\item [$(ii)$] {\it or} \SP for every compact set $\mathcal{A} \subset \RR^2$ there
exists $L_{\mathcal{A}}>0$ such that
\begin{equation}\label{SRCLIPELAST}
|g_2(u,v)-g_2(\bar{u},\bar{v})| \leqslant L_{\mathcal{A}}
\bigl(|u-\bar{u}|+|v-\bar{v}|\bigr)
\end{equation}
for all $\SP (u,v)\in \RR^2$, $(\bar{u},\bar{v})\in\mathcal{A}$.
\end{itemize}
The system \eqref{ELASTSYST} is equipped with the entropy - entropy flux pair
$\bar{\eta},\bar{q}$ given by
\begin{equation}\label{ELASTENTPAIR}
\bar{\eta}(u,v) = \tfrac{1}{2}v^2 + \Sigma(u), \quad \bar{q}(u,v) = -\sigma(u)v \quad
\mbox{with} \quad \Sigma(u) := \int^{u}_0 \sigma(\tau)\SP d\tau \SP.
\end{equation}

\par\smallskip

\noindent{\bf Relaxation via stress approximation.} Consider the following extended
system which approximates the stress $\sigma(u)$:
\begin{equation}\label{RELAXELASTSYST}
\begin{aligned}
\left(\begin{aligned}
&u\\
&v\\
&\alpha\\
\end{aligned}\right)_t -
\left(\begin{aligned}
&v\\
\alpha&+Eu\\
&0\\
\end{aligned}\right)_x = \frac{1}{\eps}R(u,v,\alpha) + G(u,v,\alpha)
\end{aligned}
\end{equation}
with
\begin{equation*}\label{RGELAST}
\begin{aligned}
R(u,v,\alpha) = \bigl( \, 0, \, 0, \, h(u)-\alpha \, \bigr)^{\T}, \quad G(u,v,\alpha) = \bigl( \, 0, \, g_2(u,v), \,  0 \, \bigr)^{\T} \SP \\[3pt]
\end{aligned}
\end{equation*}
and the function $h(u)$ defined by
\begin{equation}\label{HFUNCDEF}
h(u)= \sigma(u) - Eu \quad  \mbox{with} \quad E>\Gamma.
\end{equation}

\par\smallskip

Observe that as $\eps \to 0$, the variable $\alpha$ tends to its equilibrium state
$\alpha_{eq}=h(u)$. Thus, the corresponding equilibrium states $u_{eq},v_{eq}$ satisfy
\eqref{ELASTSYST}. This motivates the parameterization of the manifold of Maxwellians by
\begin{equation*}\label{MAXWELAST}
M(u,v,\alpha) = \bigl( u, \SP v, \SP h(u) \SP \bigr)^{\T}
\end{equation*}
which implies \eqref{MAXWPAR}. Next, we easily check that
\begin{equation*}
\begin{aligned}
\dim \mathcal{N}(\nabla R (M(u,v))) = 2 , \quad \dim \mathcal{R}(\nabla R (M(u,v))) = 1
\end{aligned}
\end{equation*}
which verifies \eqref{KERGRADR} for $n=2$, $N=3$. Also, the structure of
\eqref{ELASTSYST}, \eqref{RELAXELASTSYST} suggests the choice of the projection matrix
\begin{equation*}
\mathbb{P} =
\begin{bmatrix}
1 \quad  0 \quad 0\\
0 \quad  1 \quad 0\\
\end{bmatrix}
\end{equation*}
for which $\mathbb{P} M(u,v) = (u,v)^\T$,  $\mathbb{P} R(u,v,\alpha)={\bf 0}$ and hence
(H3) is satisfied.

\par\smallskip

At this point, we identify the corresponding entropy-entropy flux pair of the system
\eqref{RELAXELASTSYST} and verify the remaining hypotheses that allow one to apply the
theory developed in the preceding sections. In view of the requirement \eqref{GSTRESS}
for the stress $\sigma(u)$, $h(u):\RR \to \RR$ is strictly decreasing, {\it onto} and
satisfies $h(0)=0$. Hence $h^{-1}:\RR \to \RR$ is well-defined. Then, we set
\begin{equation*}\label{RXENTPAIRELAST}
\begin{aligned}
H(u,v,\alpha) &:= \tfrac{1}{2} v^2 + \tfrac{1}{2}Eu^2 + \alpha u - \int^{\alpha}_0
h^{-1}(\xi)  \SP d\xi
\\\quad Q(u,v,\alpha) &:= -(\alpha + Eu)v.
\end{aligned}
\end{equation*}
It is easy to check that $H,Q$ is the entropy-entropy flux pair for the system
\eqref{RELAXELASTSYST}. Next, we observe that the entropy $H$ maybe written as
\begin{equation}\label{ENTELAST2}
H(u,v,\alpha) = \frac{v^2}{2}  + \frac{\gamma u^2}{4} + \psi(\alpha) + \frac{(\alpha +
\widehat{E}u)^2 }{2\widehat{E}}
\end{equation}
where
\begin{equation*}
\psi(\alpha) :=  \int_0^{\alpha} \biggl( -h^{-1}(\xi) - \frac{\xi}{\widehat{E}} \biggr)
d\xi, \quad \widehat{E}:=E-\frac{\gamma}{2}>0.
\end{equation*}
From \eqref{GSTRESS} we have
\begin{equation}\label{PSIDER}
\begin{aligned}
\psi''(\alpha) &= -{h^{-1}} '(\alpha) - \frac{1}{\widehat{E}}\\
&=\frac{1}{\bigl(E-\sigma'(h^{-1}(\alpha))\bigr)}-\frac{1}{\bigl(E-\frac{\gamma}{2}\bigr)}
\geqslant \frac{\gamma}{2(E-\gamma)(E-\frac{\gamma}{2})}
> 0.
\end{aligned}
\end{equation}
Then \eqref{ENTELAST2}, \eqref{PSIDER} imply that there exist $\mu,\mu'>0$ such that
\begin{equation}\label{ENTUCONVXELAST}
\begin{aligned}
\mu {\bf{I}}  \SP \leqslant \SP \D^2 H(v,u,\alpha) \SP \leqslant \SP
 \mu' {\bf{I}}, \,\quad (u,v,\alpha) \in \RR^3
\end{aligned}
\end{equation}
and hence the pair $H,Q$ satisfies (H4). Next, we compute
\begin{equation*}
\D H(u,v,\alpha) = (Eu + \alpha, \SP v, \SP u-h^{-1}(\alpha))\\
\end{equation*}
and observe that by \eqref{GSTRESS}
\begin{equation}\label{RDISSPELAST}
-\D H (u,v,\alpha) \SP R(u,v,\alpha) = \bigl(u-h^{-1}(\alpha)\bigr)
\bigl(\alpha-h(u)\bigr) \geqslant \frac{1}{E} \bigl(\alpha-h(u)\bigr)^2
\end{equation}
which implies \eqref{RXENTPROP2}.

\par\smallskip

We next check the entropy consistency between the systems \eqref{ELASTSYST},
\eqref{RELAXELASTSYST}. First we observe that
\begin{equation}\label{QRESTRELAST}
q(u,v):=Q(M(u,v)) = (h(u)+Eu)v = \sigma(u)v.
\end{equation}
Also, we have
\begin{equation*}
\begin{aligned}
\eta(u,v):=H(M(u,v)) &= \tfrac{1}{2} v^2 + \Sigma(u) + k(u),
\end{aligned}
\end{equation*}
where
\begin{equation*}
    k(u) := \tfrac{1}{2} Eu^2+ h(u)u- \int_0^{h(u)} \SP
h^{-1}(\xi)\SP d\xi - \int_0^{u} \sigma(\xi)\SP d\xi.
\end{equation*}
From \eqref{HFUNCDEF} it follows that $k(0)=0$, and $k'(u)=0$ for all $u\in\RR$ and hence
\begin{equation}\label{HRESTRELAST}
\eta(u,v)= \tfrac{1}{2} v^2 + \Sigma(u).
\end{equation}
Then, \eqref{ELASTENTPAIR}, \eqref{HRESTRELAST}, \eqref{QRESTRELAST} imply
\eqref{ENTCONSHYP}. Next, notice that
\begin{equation*}\label{MANIFDIST}
|(u, \, v, \, \alpha)^\T - M(u,v)| = |\alpha - h(u)|
\end{equation*}
and hence \eqref{DSPTERMDEF}, \eqref{DSPTERMALT}, and \eqref{RDISSPELAST} imply
\eqref{RLXDSP} with $\nu=\tfrac{1}{E}$.

\par\smallskip

Finally, we observe that
\begin{equation}\label{SRCCONS}
\begin{aligned}
\bigl[ & \D  H(u,v,\alpha) - \D H(M(\bar{u},\bar{v}))\bigr] \bigl[G(u,v,\alpha) -
G(M(\bar{u},\bar{v}))\bigr] \\[2pt]
& =  \bigl(v-\bar{v}\bigr)\bigl(g_2(u,v)-g_2(\bar{u},\bar{v})\bigr)\\[2pt]
& =\bigl[ \D \eta(u,v) - \D \eta(\bar{u},\bar{v})\bigr] \bigl[g(u,v) -
g(\bar{u},\bar{v})\bigr]\\[2pt]
\end{aligned}
\end{equation}
for each $(u,v,\alpha)^{\T}$, $M(\bar{u},\bar{v}) \in \RR^3$. Then, if the source
$g(u,v)$ satisfies \eqref{WKDSPELAST}, then \eqref{SRCCONS} implies \eqref{SRCDSP}. If,
on the other hand, $g(u,v)$ satisfies \eqref{SRCLIPELAST}, then \eqref{SRCCONS} implies
\eqref{SRCLIP}.

\par\smallskip

Thus, if $\bigl\{(u^{\eps},v^{\eps},\alpha^{\eps})\bigr\}$ is  a uniformly bounded family of weak
solutions, one may apply Theorem \ref{locbdd} to establish convergence before formation of shocks. If such a priori information is not available, then, in addition to
\eqref{GSTRESS}-\eqref{SRCLIPELAST}, require that
\begin{equation}\label{DDSGMBNF}
    |\sigma''(u)| \leqslant K, \quad u\in\RR.
\end{equation}
In that case, from \eqref{ENTUCONVXELAST}, \eqref{DDSGMBNF} it follows that
\eqref{EXTSYSBOUNDSHYP}, \eqref{EQSYSBOUNDSHYP} hold and therefore one may apply Theorems
\ref{WKSRCTHM}, \ref{SRCLIPTHM} (depending on the type of source term).

\begin{remark}\rm
Replacing \eqref{WKDSPELAST} with the weakly dissipative condition
$$
    \bigl(g_2(v)-g_2(\bar{v})\bigr)\bigl(v-\bar{v}\bigr) \leqslant - c |v - \bar{v}|^2, \quad \forall \SP
    v,\bar{v}\in \RR,
$$
the relaxation system falls into the framework of \cite{HN03, Yong-2004}, which provides global smooth solutions for small initial data. The case of Lipschitz source terms can also be handled following similar line of argument as in  \cite{HN03, Yong-2004}. Note that the  same follows for the combustion model presented below, which has  a Lipschitz source term.
\end{remark}

\section{Application to Combustion}\label{S7}

The governing equations for chemical reaction from {\it unburnt} gases to {\it burnt}
gases in certain physical regimes read \cite{CHT03}:
\begin{equation}\label{COMBMCONS}
\begin{aligned}
&\del_t v - \del_x u  = 0\\
&\del_t u + \del_x ( P(v,s,Z) ) = 0\\
&\del_t \bigl( E(v,s,Z) + \tfrac{1}{2} u^2 + qZ \bigr)_t + \del_x(u P(v,s,Z)) = r\\
&\del_t Z + K \varphi(\Theta(v,s,Z)) Z = 0.
\end{aligned}
\end{equation}
The state of the gas is characterized by the macroscopic variables: the specific volume
$v(x,t),$  the velocity field $u(x,t),$ the entropy $s(x,t)$ and the mass fraction of the
reactant $Z(x,t),$ whereas the physical properties of the material are reflected through
appropriate constitutive relations which relate the pressure $P(v,s,Z)$, internal energy
$E(v,s,Z)$ with the macroscopic variables. Here, and in what follows, $q$ represents the
difference in the heats between the reactant and the product, $K$ denotes the rate of the
reactant, whereas $\varphi(\theta) \geqslant 0$ is the reaction function. The function
$r(x,t)$ represents a source term (additional radiating heat density).

\par\smallskip

\noindent {\bf Isentropic combustion.} In this section we address the problem of
relaxation to the isentropic combustion model:
\begin{equation}\label{ISCOMB}
\begin{aligned}
\left(\begin{aligned}
&v\\
&u\\
&Z\\
\end{aligned}\right)_t +
\left(\begin{aligned}
-u&\\
P(v,&Z) \\
0&\\
\end{aligned}\right)_x   =
\left(\begin{aligned}
\!0&\\
\!0&\\
\!-K\varphi(&\Theta(v,Z))\\
\end{aligned}\right)
\end{aligned}
\end{equation}
that arises naturally from \eqref{COMBMCONS} by externally regulating $r$ to ensure
$s=s_0$ \cite{Dafermos79}, in which case we suppress variable $s$ and use the notation
\begin{equation}
P(v,Z):=P(v,s_0,Z), \quad \Theta(v,Z):=\Theta(v,s_0,Z).
\end{equation}

\par\smallskip

\noindent{\bf Motivation for assumptions.} Our main objective is to find a proper
extended system associated with the system \eqref{ISCOMB} that models isentropic
processes with specific volume $v$ away from both zero and vacuum, that is, when $v$ has
upper and lower bounds,
\begin{equation}\label{VBND}
v_0 \leqslant v \leqslant V_0 \quad \mbox{for some} \quad v_0,V_0\in(0,\infty).
\end{equation}
For the rest of the paper we assume that the a priori bound \eqref{VBND} holds.

\par\smallskip

The physics of (isentropic) thermodynamical processes determined by the equations
\eqref{COMBMCONS} and compatible with the Clausius-Duhem inequality require the choice of
the pressure $P(v,Z)$ and temperature $\Theta(v,Z)$ which are compatible with the
following properties: for $v\in[v_0, V_0]$, $s=s_0$, and $Z\in[0,1]$
\begin{equation}\label{EQPROP}
\begin{aligned}
 P(v,z) = -\del_v E(v,s_0,Z) >0, \quad \Theta(v,Z) = \del_s E(v,s_0,Z)>0\\
\end{aligned}
\end{equation}
for some (appropriate) energy function
\begin{equation}
E(v,s,Z)>0 \quad \mbox{with} \quad E_Z(v,s_0,Z) > 0.
\end{equation}
We remark that such a function $E(v,s,Z)$ is known to exist for the system
\eqref{COMBMCONS} as long as $v$, $s$ have lower and upper bounds \cite{CHT03}.
\par\smallskip

For technical  convenience, outside of the interval \eqref{VBND}, we redefine the constitutive law $  E(v,s_0,Z)$ ensuring that the functions $P(v,Z)$, $\Theta(v,Z)$ are
defined for all $v\in\RR$, $Z\in[0,1]$ with bounded derivatives as indicated below.

\par\medskip

\noindent{\bf Conditions on $P$, $\Theta$.}
\begin{itemize}
\item[$(a1)$] Motivated by the physical property  $\del_v P <0$ we assume that
\begin{equation}\label{ICPRESSGROWTH}
0 \SP < \SP \gamma \SP < -\del_v P(v,Z)  \SP < \SP \Gamma, \quad v\in\RR,\SP Z\in[0,1].
\end{equation}

\item[$(a2)$] There exists $\bar{C}>0$ \SP such that
\begin{equation}\label{DPZGROWTH}
\quad \Bigl|\int_{0}^{v} P_{ZZ}(\tau,Z)\SP d\tau \Bigr| < \bar{C}, \,\,\quad |\del_Z
P(v,Z)| < \bar{C}, \quad v\in\RR,\SP Z\in[0,1].
\end{equation}

\item[$(a3)$] The composition $\varphi \circ \Theta$ of the rate and constitutive
temperature functions satisfies for some $L>0$
\begin{equation}\label{COMPOSLIP}
\bigl|\varphi(\Theta(v,Z))-\varphi(\Theta(\bar{v},\bar{Z}))\bigr| \, \leqslant \, L
|(v,Z)-(\bar{v},\bar{Z})|
\end{equation}
for all $(v,Z), (\bar{v},\bar{Z})\in \RR\times[0,1].$

\end{itemize}

\par\smallskip

Under $(a1)$-$(a3)$ the system \eqref{ISCOMB} admits an entropy-entropy flux pair
$\bar{\eta},\bar{q}$ of the form:
\begin{equation}\label{ISCOMBENTPAIR}
\begin{aligned}
\bar{\eta}(v,u,Z) &= \frac{1}{2}u^2 - \biggl(\int^{v}_0 \SP P(\tau,Z) \SP d\tau \biggr) +
B(Z)
\\[3pt]
\quad \bar{q}(v,u,Z) &= P(v,Z)u,
\end{aligned}
\end{equation}
where $B(Z)$ is an {\it arbitrary function}.

\par\medskip

\noindent{\bf Relaxation via approximation of pressure.} In the spirit of the example for
the elasticity system \eqref{ELASTSYST} we define
\begin{equation}\label{HDEFIC}
h(v,Z) := -P(v,Z)- Ev \quad \mbox{with} \quad E>\Gamma.
\end{equation}
We now approximate the pressure $P(v,Z)$ by the linear combination $-(\alpha+Ev)$. This
leads to the extended system,
\begin{equation}\label{RELAXCOMBSYST}
\begin{aligned}
\left(\begin{aligned}
&v\\
&u\\
&Z\\
&\alpha\\
\end{aligned}\right)_t -
\left(\begin{aligned}
&u\\
\alpha&+Ev \\
&0\\
&0\\
\end{aligned}\right)_x   =
\frac{1}{\eps}R(v,u,Z,\alpha) + G(v,u,Z,\alpha),
\end{aligned}
\end{equation}
where
\begin{equation} \label{RGCOMB}
\begin{aligned}
R(v,u,Z,\alpha) &= \bigl[ \, 0, \, 0, \, 0, \, h(v,Z)-\alpha \bigr]^{\T}\\
G(v,u,Z,\alpha) &= \bigl[ \, 0, \, 0, \, -K\varphi(\Theta)Z, \,  0  \bigr]^{\T}.
\end{aligned}
\end{equation}

\par\smallskip

Note that as $\eps \to 0$, $\alpha$ tends to its equilibrium state
$\alpha_{eq}=h(v_{eq},Z_{eq})$. Then
\begin{equation*}
    \alpha_{eq}+ E v_{eq} = - P(v_{eq},Z_{eq})
\end{equation*}
and hence
$(v_{eq},u_{eq}, Z_{eq})$ solves \eqref{ISCOMB}. This
motivates the parameterization of the manifold of Maxwellians $\mathcal{M}$ by
\begin{equation*}\label{MAXWCOMB}
M(v,u,Z) = \bigl[ v, \, u, \, Z, \, h(v,Z) \bigr]^{\T}
\end{equation*}
which yields \eqref{MAXWPAR}. Next, we compute
\begin{equation*}\label{DR}
\D R(v,u,Z,\alpha) = \begin{bmatrix}
&0& &0& &0& &0& \\
&0& &0& &0& &0& \\
&0& &0& &0& &0& \\
&h_v(v,Z)&  &0& &h_Z(v,Z)& &-1&  \\
\end{bmatrix}
\end{equation*}
from which we conclude
\begin{equation*}
\begin{aligned}
\dim \mathcal{N} \bigl(\D R (M(v,u,Z)) \bigr) = 3 , \quad \dim \mathcal{R} \bigl( \SP \D
R (M(v,u,Z)) \bigr) = 1
\end{aligned}
\end{equation*}
which verifies \eqref{KERGRADR} for $n=3$, $N=4$. We choose the projection matrix
\begin{equation*}
\mathbb{P} =
\begin{bmatrix}
1 \quad  0 \quad 0 \quad 0 \\
0 \quad  1 \quad 0 \quad 0 \\
0 \quad  0 \quad 1 \quad 0 \\
\end{bmatrix}
\end{equation*}
for which $\mathbb{P} M(v,u,Z) = (v,u,Z)^\T$,  $\mathbb{P} R(v,u,Z,\alpha)={\bf 0}$ and
hence (H3) holds.

\par\medskip

\noindent{\bf Entropy of the extended system.} We next specify the entropy-entropy flux
pair of the relaxation system \eqref{RELAXCOMBSYST}. By \eqref{ICPRESSGROWTH}
\begin{equation}\label{DhvPROP1}
0 \, < \, E-\Gamma  \, <  \,  - h_v(v,Z) \, < \, E-\gamma.
\end{equation}
Hence there exists $j(\alpha,Z): \RR\times [0,1] \to \RR$ such that
\begin{equation}\label{JHPROP}
\begin{aligned}
j (h(v,Z),\!Z ) = v, \quad   h(j(\alpha,Z),\!Z) = \alpha
\end{aligned}
\end{equation}
for all $v,\alpha \in \RR, Z\in[0,1]$. Thus, we define
\begin{equation*}
\begin{aligned}
H(v,u,Z,\alpha) & \SP := \SP  \frac{u^2}{2} - \int^{\alpha}_{h(0,Z)} j(\xi,Z) \SP d\xi  +
\alpha v +
\frac{E v^2}{2}+ B(Z)\\[3pt]
Q(u,v,\alpha) & \SP := \SP -\bigl(\alpha + E v\bigr) u
\end{aligned}
\end{equation*}
with $B(Z)$ an {arbitrary function} such that
\begin{equation}\label{BPROP}
    B''(Z) \SP > \SP m \SP > \SP 0, \quad Z\in [0,1]
\end{equation}
where the constant $m>0$ is to be specified.

\par\medskip

It is easy to check that $H,Q$ is the entropy-entropy flux pair for
\eqref{RELAXCOMBSYST}. To show that $H(U)$ is strictly convex, however,  is less trivial
and therefore, for the convenience of a reader, we provide detailed calculations.
Recalling that $E > \Gamma > \gamma$ we rewrite $H(v,u,Z,\alpha)$ as follows:
\begin{equation}\label{RXENTIC2}
H(u,v,Z,\alpha) =  \biggl( \frac{u^2}{2}+\frac{\gamma v^2}{4} + \psi(\alpha,Z) \biggr) +
\frac{\bigl(\alpha+\widehat{E} v \bigr)^2}{2 \widehat{E}}
\end{equation}
with
\begin{equation*}\label{PSIDEFIC}
\psi(\alpha, Z) \SP := \SP - \int^{\alpha}_{h(0,Z)} \SP j(\xi,Z) \SP d\xi  -
\frac{\alpha^2}{2\widehat{E}} + B(Z), \quad \widehat{E} := E - \frac{\gamma}{2}.
\end{equation*}

\par\medskip

We now show that there exists $\Lambda>0$ such that
\begin{equation}\label{D2PSIEST}
\Lambda^{-1} {\bf I} \, \leqslant \,    \D^2 \psi(\alpha,Z) \, \leqslant \, \Lambda {\bf
I}
\end{equation}
by establishing the bounds on the eigenvalues of
\begin{equation}\label{D2PSI}
\D^2 \psi(\alpha,Z) =\!
\begin{bmatrix}
&\!\!\! -j_{\alpha} (\alpha,Z)- \widehat{E}^{-1}  &\!\!- j_{Z}(\alpha,Z)  \\[6pt]
&\!\!\! -j_Z(\alpha,Z)     & \!\!B''(Z)- \del_{ZZ} \Bigl(\int_{h(0,Z)}^{\alpha} j(\xi,Z)
\SP d\xi \Bigr)
\end{bmatrix} .
\end{equation}
Differentiating \eqref{JHPROP}$_2$ and recalling \eqref{HDEFIC} we get
\begin{equation}\label{DJ}
\begin{aligned}
j_{\alpha}(\alpha,Z) \SP = \SP \frac{1}{ h_v \bigl(j(\alpha,Z), Z \bigr)}, \, \quad
j_{Z}(\alpha,Z) \SP = \frac{P_Z(j(\alpha,Z),Z)}{h_v(j(\alpha,Z),Z)} \SP
\end{aligned}
\end{equation}
and hence by \eqref{DPZGROWTH}, \eqref{DhvPROP1}
\begin{equation}\label{DJEST}
\begin{aligned}
\frac{1}{E-\gamma} \, \leqslant \, -j_{\alpha}(\alpha,Z) \SP \leqslant \SP \frac{1}{E -
\Gamma},  \quad  \bigl|j_{Z}(\alpha,Z)\bigr| \, \leqslant \, \frac{\bar{C}}{E - \Gamma}
\SP.
\end{aligned}
\end{equation}
Then by \eqref{DJEST}$_1$
\begin{equation}\label{D2PSI11EST}
\frac{\gamma}{2(E-\gamma)\widehat{E}} \, \leqslant \, \Bigl[\D^2
\psi(\alpha,Z)\Bigr]_{11} \, \leqslant \,
\frac{\Gamma-\frac{1}{2}\gamma}{(E-\Gamma)\widehat{E}} \, .\\[0pt]
\end{equation}
Next, using \eqref{JHPROP}, \eqref{DJ}$_{1,2}$  we compute
\begin{equation*}\label{JZ}
\begin{aligned}
&\del_{Z} \biggl( \int_{h(0,Z)}^{\alpha} j(\xi,Z)\SP d\xi \biggr) =\\
&\quad =\int_{h(0,Z)}^{\alpha} j_Z(\xi,Z)\SP d\xi - \Bigl( j(h(0,Z),\!Z) h_Z(0,Z) \Bigr) \\
& \quad = \int_{h(0,Z)}^{\alpha} P_Z(j(\xi,Z),Z) j_{\alpha}(\xi,Z) \SP d\xi  =
\int_{0}^{j(\alpha,Z)} P_Z(\tau,Z) \SP d\tau
\end{aligned}
\end{equation*}
and hence
\begin{equation*}
\begin{aligned}
&\del_{ZZ} \biggl( \int_{h(0,Z)}^{\alpha} \SP j(\xi,Z)\SP d\xi \biggr) \\
    &  \qquad = P_Z(j(\alpha,Z),Z) j_Z(\alpha,Z) + \int_0^{j(\alpha,Z)} \!\! P_{ZZ}(\tau,Z)\, d\tau \SP.\\
\end{aligned}
\end{equation*}
Then, by \eqref{DPZGROWTH}, \eqref{DJEST}$_2$ we conclude
\begin{equation}\label{JZZEST}
\biggl| \del_{ZZ} \biggl( \int_{0}^{\alpha} j(\xi,Z)\, d\xi \biggr) \biggr| \, \leqslant
\, \bar{C} \biggl( 1 + \frac{\bar{C}}{E-\Gamma} \biggr).
\end{equation}
The analysis of the above inequalities motivates to choose
\begin{equation*}\label{MMHDEF}
m:= \widehat{m} + {\bar{C}} \biggl( 1 + \frac{{\bar{C}}}{E-\Gamma} \biggr) \quad
\mbox{with}\quad \widehat{m} := \biggl[ \! \Bigl( \frac{{\bar{C}}^2}{E - \Gamma} \Bigr)^2
+ 1 \! \biggr] \frac{2(E-\gamma)\widehat{E}}{\gamma}
\end{equation*}
in which case by \eqref{BPROP}, \eqref{D2PSI} and \eqref{JZZEST} we obtain
\begin{equation}\label{D2PSI22EST}
0 \, < \, \widehat{m} \, \leqslant \, \Bigl[\D^2 \psi(\alpha,Z)\Bigr]_{22} \, \leqslant
\, 2m.
\end{equation}
Combining \eqref{DJEST}$_2$, \eqref{D2PSI11EST}, and \eqref{D2PSI22EST} we get
\begin{equation}\label{abc}
1 \, \leqslant \, \det \Bigl[\D^2 \psi(\alpha,Z) \Bigr] = \lambda_1 \lambda_2,
\end{equation}
where $\lambda_1, \lambda_2 \in \RR$ denote the largest and smallest eigenvalues of $\D^2
\psi$, respectively. Observe that \eqref{D2PSI}, \eqref{D2PSI11EST}, and
\eqref{D2PSI22EST} imply
\begin{equation}\label{LMBDEF}
 0 < \, \lambda_1 \, \leqslant \,  \Lambda := \biggl(2m +
\frac{\Gamma-\frac{\gamma}{2}}{(E-\Gamma)\widehat{E}} + \frac{{\bar{C}}^2}{E - \Gamma}
\biggr) \SP.
\end{equation}
Then, from \eqref{abc}, \eqref{LMBDEF} we obtain the estimate \eqref{D2PSIEST}.


\par\smallskip

Combining \eqref{RXENTIC2}, \eqref{D2PSIEST}  we conclude that for some $\mu,\mu'>0$
\begin{equation}\label{D2RXPENTEST}
\begin{aligned}
\mu \SP {\bf{I}}  \, \leqslant \, \D^2 H(v,u,Z,\alpha) \, \leqslant  \, \mu' \SP {\bf{I}}
\end{aligned}
\end{equation}
and this yields \eqref{RXENTPROP1}.

\par\smallskip

Now, recalling \eqref{RGCOMB}, \eqref{JHPROP}, and \eqref{DJEST}$_1$  we obtain
\begin{equation*}\label{RDISSPIC}
\begin{aligned}
-&\D H (v,u,Z,\alpha) \SP R(v,u,Z,\alpha)\\[1pt]
&\qquad = \,-\bigl(j(h(v,Z),Z)-j(\alpha,Z)\bigr) \bigl(h(v,Z)-\alpha\bigr)\\[0pt]
&\qquad = \, -\Bigl[\int_0^1 \SP j_{\alpha}\bigl(s h(v,Z) + (1-s)\alpha,Z \bigr)\SP ds
\Bigr]   \bigl(h(v,Z) - \alpha\bigr)^2 \\[0pt]
 &\qquad \geqslant \SP \frac{1}{E-\gamma} \SP
\bigl(h(v,Z)-\alpha\bigr)^2 \SP = \SP \frac{1}{E-\gamma}\SP \bigl|M(v,u,Z) -
(v,u,Z,\alpha)^{\T} \bigr|^2
\end{aligned}
\end{equation*}
which implies that the entropy $H$ satisfies hypotheses \eqref{RXENTPROP2},
\eqref{RLXDSP}.

\par\medskip

Next, we observe that \eqref{HDEFIC}, \eqref{JHPROP}, and \eqref{DJ}$_1$ imply
\begin{equation*}\label{INTJ}
\begin{aligned}
\int^{h(v,Z)}_{h(0,Z)} \SP j(\xi,Z)\SP d\xi &
= \int^{v}_{0} \SP h_v(\tau,Z)\tau \SP d\tau 
= h(v,Z)v + \frac{E v^2}{2} + \int^{v}_{0}  P(v,Z) \SP d\tau.
\end{aligned}
\end{equation*}
Thus, the entropy pair $H,Q$ restricted to the equilibrium manifold satisfies
\begin{equation}\label{HQRESTR}
\begin{aligned}
 \eta(v,u,Z) &:= H(M(v,u,Z))
 = \frac{u^2}{2} - \int^{v}_{0} P(v,Z) \SP d\tau + B(Z)\\
 q(v,u,Z)&:=Q(M(v,u,Z)) = - \bigl(h(v,Z)+Ev \bigr)u = P(v,Z)u.
\end{aligned}
\end{equation}
Then, \eqref{HQRESTR} together with \eqref{ISCOMBENTPAIR} yields \eqref{ENTCONSHYP}.

\par\medskip

Consider an arbitrary compact set $\mathcal{A}\subset \RR\times\RR\times[0,1]\times\RR$.
Then, by \eqref{COMPOSLIP} for all
\begin{equation*}
\begin{aligned}
(v,u,Z,\alpha) \in \RR\times\RR\times[0,1]\times\RR, \quad
(\bar{v},\bar{u},\bar{Z},\bar{\alpha}) \in \mathcal{A},
\end{aligned}
\end{equation*}
we have
\begin{equation}\label{GLIPCOMBEST}
\begin{aligned}
& \bigl|G(v,u,Z,\alpha) - G(\bar{v},\bar{u},\bar{Z},\bar{\alpha}) \bigr| \\[2pt]
& \;\quad = \, \bigl|K \varphi(\Theta(v,Z))Z - K \varphi(\Theta(\bar{v},\bar{Z}))\bar{Z}\bigr|\\[0pt]
& \;\quad \leqslant \, |K| \Bigl(\SP \bigl|Z\bigr|\bigl| \SP \varphi(\Theta(v,Z)) -
  \varphi(\Theta(\bar{v},\bar{Z}))\bigr| + \bigl|\varphi(\Theta(\bar{v},\bar{Z}))||Z-\bar{Z}|
  \Bigr)\\[0pt]
  & \;\quad  \leqslant \, (L+L_{\mathcal{A}})  |K|
  \bigl|(v,u,Z,\alpha)-(\bar{v},\bar{u},\bar{Z},\bar{\alpha})\bigr|,
\end{aligned}
\end{equation}
where  $L_{\mathcal{A}}>0$ denotes a constant for which, in view of \eqref{COMPOSLIP},
there holds
\begin{equation*}\label{PHIBNDONCOMP}
\bigl|\varphi(\Theta(\bar{v},\bar{Z}))\bigr| \leqslant L_{\mathcal{A}}, \quad
(\bar{v},\bar{u},\bar{Z},\bar{\alpha}) \in \mathcal{A}.
\end{equation*}

\par\smallskip

 The estimate \eqref{GLIPCOMBEST} implies that the source $G$ satisfies the hypothesis \eqref{SRCLIP} on
$\RR\times\RR\times[0,1]\times\RR$, the state space of  \eqref{RELAXCOMBSYST} with initial data such that $0 \leqslant
Z(\cdot,0) \leqslant 1 $. Thus, if the family
$\bigl\{(v^{\eps},u^{\eps},Z^{\eps},\alpha^{\eps})\bigr\}$ is uniformly bounded, one may
apply Theorem \ref{locbdd} to establish convergence before the formation of shocks. If such a
priori information is not available, then, in addition to $(a1)$-$(a3)$, require that
\begin{equation}\label{DDPB}
    |\D^2 P(v,Z) | \leqslant K, \quad |B'''(Z)|< K, \quad v\in\RR, Z\in[0,1]\in\RR.
\end{equation}
In that case, from \eqref{D2RXPENTEST}, \eqref{DDPB} it follows that
\eqref{EXTSYSBOUNDSHYP}, \eqref{EQSYSBOUNDSHYP} hold and therefore one may apply Theorem
\ref{SRCLIPTHM}.

\section{General framework for symmetric hyperbolic systems, $d=1$}\label{S8}

In this section we present a general strategy indicating how starting from a symmetric
hyperbolic system one can construct an extended relaxation system.

\par\medskip

Consider the hyperbolic balance law
\begin{equation}\label{SYMSYST}
    \del_t u + \del_x f(u) = g(u), \quad u,f(u),g(u)\in \RR^n
\end{equation}
such that:
\begin{itemize}
    \item The flux $f(u)$ has {\it symmetric} $\D_{u}f(u)$. Thus,
\begin{equation}\tag{h1}\label{SYMFLUXPROP}
f(u) = \D \Phi^{\T}(u) \quad \mbox{for some} \quad \Phi(u):\RR^n \to \RR.
\end{equation}

\item $\Phi$ is {\it convex} and for some $\Gamma,\gamma>0$ such that
\begin{equation}\tag{h2}\label{PHIBOUNDSYM}
0<\gamma < \D^2 \Phi(u) < \Gamma, \quad  \quad  u\in \RR^n.
\end{equation}

\item For each compact $\mathcal{A} \subset \RR^n$ there exists $L_{\mathcal{A}}>0$ such
that
\begin{equation}\tag{h3}\label{SRCSYM}
\bigl|g(u)-g(\bar{u})\bigr| \SP \leqslant \SP L_{\mathcal{A}} \SP |u-\bar{u}|,  \,\quad
\SP u\in \RR^n, \,\bar{u}\in \mathcal{A}.
\end{equation}
\end{itemize}
By \eqref{SYMFLUXPROP} the system \eqref{SYMSYST} admits entropy-entropy flux pair
\begin{equation*}
\bar{\eta}(u) = \Phi(u), \quad \, \bar{q}(u) = \frac{1}{2}|\D \Phi(u)|^2.
\end{equation*}

\par\smallskip

\noindent{\bf Relaxation via flux approximation.} Next, we approximate the flux $f(u)$ by
the combination $\alpha + {\D \mathcal{E}}^{\T}(u)$, where $\alpha\in\RR^n$ is a new
vector variable and $\mathcal{E}(u):\RR^n \to \RR$ is a convex function such that for
some $E,\delta>0$  there holds
\begin{equation}\tag{h4}\label{EPROPSYM}
\begin{aligned}
(E+\delta) \SP {\bf I}  \SP \geqslant \SP \D^2_{u} \mathcal{E}(u) \SP \geqslant \SP E \SP
{\bf I}, \,\quad E >\Gamma > \gamma > \delta >0.
\end{aligned}
\end{equation}
This leads to the relaxation system for variables $u,\alpha\in\RR^n$
\begin{equation}\label{RELAXSYMSYST}
\begin{aligned}
\left(\begin{aligned}
&u\\
&\alpha\\
\end{aligned}\right)_t +
\left(\begin{aligned}
\alpha&+{\D\mathcal{E}}^{\T}(u)\\
&0\\
\end{aligned}\right)_x   =
\frac{1}{\eps}R(u,\alpha) + G(u,\alpha)
\end{aligned}
\end{equation}
with
\begin{equation} \label{RGSYM}
\begin{aligned}
R(u,\alpha) = \bigl[\SP 0, \, h(u)-\alpha \SP \bigr]^{\T},\quad G(u,\alpha) = \bigl[\SP
g(u),\, 0 \SP \bigr]^{\T}.
\end{aligned}
\end{equation}

\par\smallskip

We now define
\begin{equation}\label{SGMHDEF}
\begin{aligned}
\Sigma(u) := \mathcal{E}(u)-\Phi(u), \quad h(u):= -\D_{u} \Sigma^{\T}(u) = f(u)-\D_{u}
{\mathcal{E}}^{\T}(u).
\end{aligned}
\end{equation}
Then, by \eqref{PHIBOUNDSYM}, \eqref{EPROPSYM} we have
\begin{equation}\label{SGMCONVX}
\begin{aligned}
\D^2_{u} \mathcal{E} \SP \geqslant \SP E\SP {\bf I} \SP > \SP (E +\delta -\gamma)\SP{\bf
I} \SP > \SP \D^2_{u} \Sigma \SP
> \SP (E-\Gamma)\SP{\bf I}.
\end{aligned}
\end{equation}
The mapping
\begin{equation}\label{DSGMONTO}
{\D_{u} \Sigma: \RR^n \to \RR^n \quad \mbox{is onto}}
\end{equation}
as implied by the following lemma (c.f. Zeidler \cite{Zeidler}).

\begin{lemma}
Suppose $V(u):\RR^n \to \RR^n$ is a $C^1$-mapping such that
\begin{equation*}
\D V(u):\RR^n \to \RR^n \,\, \mbox{is invertible for all $u\in\RR^n$}
\end{equation*}
and the map $V(u)$ is coercive, that is,  $V(u)^{\T} {u}> c |u|^2$, $u\in\RR^n$ for some
fixed constant $c>0$.
Then, $V$ must be {\it surjective} and covers all of $\RR^n$.
\end{lemma}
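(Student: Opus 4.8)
The plan is to show that the image $V(\RR^n)$ is a nonempty subset of $\RR^n$ that is simultaneously open and closed; since $\RR^n$ is connected, this forces $V(\RR^n)=\RR^n$, i.e.\ surjectivity.

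First I would record the norm bound implied by coercivity. By the Cauchy--Schwarz inequality $V(u)^{\T} u \leqslant |V(u)|\,|u|$, and combined with the hypothesis $V(u)^{\T}u > c|u|^2$ this yields $|V(u)| > c|u|$ for every $u\neq 0$. In particular $|V(u)|\to\infty$ as $|u|\to\infty$, so the preimage under $V$ of any bounded set is bounded. Next, for openness of the image: since $\D V(u)$ is invertible at every point, the inverse function theorem makes $V$ a local $C^1$-diffeomorphism near each $u$, hence an open map, so $V(\RR^n)$ is open.

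For closedness of the image I would argue sequentially. Let $y$ be a limit point of $V(\RR^n)$, say $V(u_k)\to y$. The sequence $\{V(u_k)\}$ is bounded, so by the bound $|V(u)|>c|u|$ the sequence $\{u_k\}$ is bounded; passing to a convergent subsequence $u_{k_j}\to u_*$ and using continuity of $V$ gives $V(u_*)=y$. Hence $y\in V(\RR^n)$ and the image is closed. Finally, $V(\RR^n)$ is nonempty, open, and closed in the connected space $\RR^n$, so $V(\RR^n)=\RR^n$.

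The main obstacle is the closedness step, which is precisely where the coercivity hypothesis is indispensable: without the norm growth $|V(u)|>c|u|$ the image of a local diffeomorphism need not be closed (for instance $u\mapsto e^u$ on $\RR$ is a local diffeomorphism whose image omits $(-\infty,0]$), and surjectivity would fail. Everything else is a routine application of the inverse function theorem together with connectedness of $\RR^n$.
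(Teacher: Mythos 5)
Your proof is correct and complete. Note that the paper itself offers no proof of this lemma at all: it simply states it with a pointer to Zeidler's book, where results of this type are usually obtained via monotone/coercive operator theory or a global inverse function theorem. Your argument --- image open by the inverse function theorem (local diffeomorphism, hence open map), image closed by the properness coming from $|V(u)|>c|u|$ via Cauchy--Schwarz, then connectedness of $\RR^n$ --- is the standard elementary route in finite dimensions, and every step checks out: the bound $|V(u)|>c|u|$ for $u\neq 0$ does give boundedness of preimages of bounded sets, the subsequence extraction and continuity give closedness, and openness plus closedness plus nonemptiness in the connected space $\RR^n$ give surjectivity. The only cosmetic remark is that the hypothesis as literally stated, $V(u)^{\T}u>c|u|^2$ for all $u$, is vacuously violated at $u=0$; it should be read as holding for $u\neq 0$ (or with $\geqslant$), and your proof only uses it in that form, so nothing is affected. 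Your closing observation that coercivity is exactly what rules out examples like $u\mapsto e^u$ correctly identifies the role of the hypothesis.
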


\par\smallskip

Observe that as $\eps \to 0$,  $\alpha$ tends to its equilibrium state
$\alpha_{eq}=h(u_{eq})$ in which case the corresponding equilibrium state $u_{eq}$
satisfies \eqref{SYMSYST}. This suggests the parameterization  of the manifold of
Maxwellians by
\begin{equation*}
M(u) = \bigl[ \SP u, \SP h(u) \SP \bigr]^{\T}
\end{equation*}
which yields \eqref{MAXWPAR}. Next, observe that
\begin{equation*}
\begin{aligned}
\dim \mathcal{N}(\D R (M(u))) = n , \quad \dim \mathcal{R}(\D R (M(u))) = n
\end{aligned}
\end{equation*}
which verifies \eqref{KERGRADR} with $N=2n$. The structure of \eqref{SYMSYST},
\eqref{RELAXSYMSYST} suggests the choice of the projection matrix
\begin{equation*}
\mathbb{P} = \bigl[ \,{\bf{I}},  {\bf{0}} \, \bigr] :\RR^{2n} \to \RR^n \quad \mbox{for
which} \quad \mathbb{P} M(u) = u,\,\,  \mathbb{P} R(u,\alpha)={\bf 0}
\end{equation*}
which implies (H3).

\par\smallskip

To construct the entropy-entropy flux pair for the relaxation system \eqref{RELAXSYMSYST}
we exploit the ideas of the analysis of A. Tzavaras \cite{Tz06}. By \eqref{SGMCONVX},
\eqref{DSGMONTO} the map $\D \Sigma$ is bijective. This motivates the definition of
\begin{equation}\label{jDEF}
j(\alpha):\RR^n \to \RR^n \quad \mbox{by} \quad    j(\alpha)^{\T} = - (\D_{u}
\Sigma)^{-1}(-\alpha), \quad \alpha\in\RR^n.
\end{equation}
Then, by the inverse mapping theorem, $\D_{\alpha} j(\alpha)$ is {\it symmetric} and
hence there exists $J(\alpha):\RR^n \to \RR\SP$ such that
\begin{equation}\label{JPROP}
\begin{aligned}
 \D_{\alpha}J(\alpha) &= j(\alpha)^{\T}=- (\D \Sigma)^{-1}(-\alpha)\\[3pt]
 \D^2_{\alpha} J(\alpha) &= \bigl[ \D^2_{u} \Sigma(-\D_{\alpha}
 J^{\T}(\alpha))\bigr]^{-1}=\bigl[ \D^2_{u} \Sigma(-j(\alpha))\bigr]^{-1}.\\[3pt]
\end{aligned}
\end{equation}
Furthermore, by \eqref{SGMCONVX} we obtain that $J(\alpha)$ is uniformly convex with
\begin{equation}\label{JCONVX}
(E+\Gamma)^{-1} \SP {\bf I} \SP > \SP \D^2_{\alpha} J(\alpha)  \SP > \SP
(E+\delta-\gamma)^{-1} \SP {\bf I}.
\end{equation}
We next define
\begin{equation}\label{RXENTPAIRSYM}
\begin{aligned}
H(u,\alpha) &= \mathcal{E}(u) + \alpha^{\T}u + J(\alpha)\\[2pt]
Q(u,\alpha) &= \frac{1}{2} \bigl| \SP \alpha + {\D \mathcal{E}}^{\T}(u)\bigr|^2.
\end{aligned}
\end{equation}
It is easy to verify that $H,Q$ is the entropy-entropy flux pair for the system
\eqref{RELAXSYMSYST}. To show that $H(u,\alpha)$ is strictly convex, we compute the
Hessian
\begin{equation*}\label{D2HSYM}
\D^2 H(u,\alpha) =
\begin{bmatrix}
\!\!\!\!&\D^2_{u}\mathcal{E}(u)   &{\bf I} \\[3pt]
\!\!\!\!&{\bf I}               &\D^2_{\alpha}J(\alpha)\\
\end{bmatrix}
\end{equation*}
and write
\begin{equation*}
\begin{aligned}
(u,\alpha)^{\T}  \bigl[\D^2 H(u,\alpha)\bigr](u,\alpha)= \, u^{\T} \bigl[\D^2
\mathcal{E}(u)\bigr]u + 2\SP\alpha^{\T}u + \alpha^{\T}\bigl[\D^2 J(\alpha)\bigr]\alpha.
\end{aligned}
\end{equation*}
Then, recalling \eqref{EPROPSYM}, \eqref{SGMCONVX}, and \eqref{JCONVX} we get the
estimates
\begin{equation*}
\begin{aligned}
(u,\alpha)^{\T} \bigl[\D^2 H(u,\alpha)\bigr](u,\alpha)
 > \SP \tfrac{1}{2}(\gamma-\delta)|u|^2 +
\frac{\tfrac{1}{2}(\gamma-\delta)|\alpha|^2}{\bigl(E+\tfrac{1}{2}(\delta-\gamma)\bigr)\bigl(E+\delta-\gamma\bigl)}
\end{aligned}
\end{equation*}
and
\begin{equation*}
\begin{aligned}
(u,\alpha)^{\T}  \bigl[\D^2 H(u,\alpha)\bigr](u,\alpha) \SP \leqslant \SP (E+\delta+1)
|u|^2 + \bigl( (E-\Gamma)^{-1}+1 \bigr) |\alpha|^2.
\end{aligned}
\end{equation*}
The above inequalities and the fact that $\gamma>\delta$ imply that there exist
$\mu,\mu'>0$ such that
\begin{equation}\label{ENTUCONVSYM}
\mu' {\bf I} \SP \leqslant \SP \D^2 H(u,\alpha) \SP \leqslant \mu \SP {\bf I}, \quad
(u,\alpha)\in\RR^{n+n}
\end{equation}
and hence we conclude that the pair $H,Q$ satisfies \eqref{RXENTPROP1}.

\par\smallskip

Next, we compute
\begin{equation*}
\D H(u,\alpha) = \bigl[\SP \D_{u} \mathcal{E}(u)+\alpha^{\T}, \, u^{\T}+
\D_{\alpha}J(\alpha) \SP \bigr]
\end{equation*}
and observe that by \eqref{SGMHDEF}$_1$, \eqref{jDEF},
\begin{equation}\label{jhcomp}
-j(h(u)) = (\D \Sigma)^{-1}(-h(u))  =  (\D \Sigma)^{-1}(\D \Sigma^{\T}(u)) = u.
\end{equation}
Hence recalling \eqref{RGSYM}$_1$, \eqref{SGMHDEF},  \eqref{JPROP}$_1$, and
\eqref{JCONVX} we obtain
\begin{equation*}
\begin{aligned}
-\D H & (u,\alpha) R(u,\alpha)= \\[2pt]
&=\bigl(u^{\T} + \D_{\alpha}J(\alpha)
\bigr)\bigl(\alpha- h(u) \bigr)\\[2pt]
& = \bigl( j(\alpha)-j(h(u)) \bigr)^{\T}  \bigl(\alpha - h(u) \SP
\bigr)\\[2pt]
& = (\alpha-h(u)\bigr)^{\T} \biggr[ \int^{1}_0 \, \D^2 J \bigl(s\alpha +
(1-s)h(u)\bigr)\,ds \biggl]\bigl(\alpha-h(u)\bigr)\\
& \geqslant \frac{1}{(E+\delta-\gamma)}\bigl|\alpha-h(u)\bigr|^2 =
\frac{1}{(E+\delta-\gamma)}\bigl|(u,\alpha)^{\T}-M(u)\bigr|^2.
\end{aligned}
\end{equation*}
The last inequality implies that $H$ satisfies hypotheses \eqref{RXENTPROP2},
\eqref{RLXDSP}.

\par\smallskip

Next, observe that by \eqref{SGMHDEF}, \eqref{RXENTPAIRSYM}
\begin{equation}\label{HONMANIFSYM}
\begin{aligned}
\eta(u):=H(M(u)) &= H(u,h(u))\\
&= \mathcal{E}(u) + h(u)^{\T}u  + J(h(u))\\
&= \Phi(u)+\Sigma(u) - [\D_{u}\Sigma(u)]u + J(h(u)).\\
\end{aligned}
\end{equation}
Then, by \eqref{SGMHDEF}, \eqref{JPROP}$_1$, and \eqref{jhcomp}
\begin{equation*}
\begin{aligned}
\D_u \eta(u) &= \D_{u} \Phi(u)-\bigl[\D^2_{u}\Sigma(u)\bigr]u +
j(h(u))^{\T}\bigl[-\D^2_{u}\Sigma(u)\bigr]
 = \D_{u} \Phi(u)
\end{aligned}
\end{equation*}
and we conclude
\begin{equation*}\label{ETAFORM}
    \eta(u) = \Phi(u) + C \quad \mbox{for some} \quad C\in\RR.
\end{equation*}
Similarly, by \eqref{SGMHDEF}, \eqref{RXENTPAIRSYM}
\begin{equation}\label{QONMANIFSYM}
\begin{aligned}
q(u):=Q(M(u)) = Q(u,h(u))= \tfrac{1}{2} \bigl|h(u) + \D \mathcal{E}^{\T}(u) \bigr|^2
=\tfrac{1}{2} \bigl|{\D\Phi}(u) \bigr|^2.
\end{aligned}
\end{equation}
By the discussion in the beginning of the section we conclude that $\eta,q$ defined in
\eqref{HONMANIFSYM}, \eqref{QONMANIFSYM} is an entropy-entropy flux pair of
\eqref{SYMSYST} which implies \eqref{ENTCONSHYP}.

\par\medskip


Now, take an arbitrary {\it compact} set $\mathcal{C}\subset \RR^n\times\RR^n$ and
define
\begin{equation*}
\mathcal{A} = \bigl\{\bar{u}\in\RR^n: \, \mbox{for some $\bar{\alpha}\in\RR^n$}
\,\,(\bar{u},\bar{\alpha})\in \mathcal{C} \bigr\}
\end{equation*}
which is compact as well. Then, by \eqref{SRCSYM} for all $(u,\alpha) \in \RR^{n+n}$,
$(\bar{u},\bar{\alpha}) \in \mathcal{C}$
\begin{equation*}
    |G(u,\alpha)-G(\bar{u},\bar{\alpha})|=|g(u)-g(\bar{u})|
    \leqslant L_{\mathcal{A}}|(u,\alpha)-(\bar{u},\bar{\alpha})|.
\end{equation*}
The above estimate shows that $G(u,\alpha)$ satisfies \eqref{SRCLIP}. Thus, the
relaxation system \eqref{RELAXSYMSYST} satisfies (H1)-(H7), \eqref{SRCLIP}. Thus, if
$\bigl\{u^{\eps}\bigr\}$ is uniformly  bounded family of weak solutions, one may apply
Theorem \ref{locbdd} to establish convergence. If such a priori information is not
available, then, in addition to \eqref{SYMFLUXPROP}-\eqref{EPROPSYM}, require that
\begin{equation}\label{D3PHI}
    |\D^3 \Phi(u) | \leqslant K, \quad u\in\RR^n.
\end{equation}
In that case, from \eqref{ENTUCONVSYM}, \eqref{D3PHI} it follows that
\eqref{EXTSYSBOUNDSHYP}, \eqref{EQSYSBOUNDSHYP} hold and therefore one may apply Theorem
\ref{SRCLIPTHM} to establish convergence in the smooth regime.

\medskip

\section{Acknowledgements}
 The authors thank Thanos Tzavaras for several fruitful conversations and suggestions during
the course of this investigation. A.M. thanks Robin Young for advice and helpful comments
regarding the combustion model. K.T. acknowledges the support by the National Science
foundation under the grant DMS-1211519 and the support by the Simons Foundation under the
grant $\# 267399$.

\end{document}